\newtheorem{theorem}{Theorem}[section]
\newtheorem{lemma}[theorem]{Lemma}
\newtheorem{proposition}[theorem]{Proposition}
\newtheorem{conjecture}[theorem]{Conjecture}
\theoremstyle{definition}
\newtheorem{ass}[theorem]{Assumption}
\theoremstyle{remark}
\newtheorem{remark}[theorem]{Remark}
\numberwithin{equation}{section}
\newcommand\Z{\ensuremath{\mathbb Z}}\newcommand\A{\ensuremath{\mathbb A}}
\newcommand\Q{\ensuremath{\mathbb Q}}\newcommand\R{\ensuremath{\mathbb R}}
\newcommand\C{\ensuremath{\mathbb C}}
\newcommand\Qb{{\overline\Q}}
\newcommand\Aut{\operatorname{Aut}}
\newcommand\coker{\operatorname{coker}}
\newcommand\Div{\operatorname{Div}}
\newcommand\End{\operatorname{End}}
\newcommand\Gal{\operatorname{Gal}}
\newcommand\GL{\operatorname{GL}}
\newcommand\Hom{\operatorname{Hom}}
\newcommand\M{\operatorname{M}}
\newcommand\ord{\operatorname{ord}}
\newcommand\PGL{\operatorname{PGL}}
\newcommand\PSL{\operatorname{PSL}}
\newcommand\SL{\operatorname{SL}}
\newcommand\Jac{\operatorname{Jac}}
\renewcommand{\cH}{\mathcal{H}}
\newcommand{\fp}{{\mathfrak{p}}}
\newcommand{\fl}{{\mathfrak{l}}}
\newcommand{\fL}{{\mathfrak{L}}}
\newcommand{\fn}{{\mathfrak{n}}}
\newcommand{\Qbar}{{\overline{\Q}}}
\newcommand{\fd}{{\mathfrak{d}}}
\newcommand{\cU}{{\mathcal{U}}}
\newcommand{\HH}{\mathcal{H}_3}
\newcommand{\Meas}{\mathrm{Meas}}
\newcommand{\codim}{\mathrm{codim}}
\def\cO{{\mathcal O}}
\newcommand\acc[2]{\ensuremath{{}^{#1}\hskip-0.7ex{#2}}}
\newcommand{\mtx}[4]{\begin{pmatrix*}[r]#1&#2\\#3&#4\end{pmatrix*}}
\newcommand{\mat}[1]{\begin{pmatrix*}[r]#1\end{pmatrix*}}
\newcommand{\smtx}[4]{\left(\begin{smallmatrix}#1&#2\\#3&#4\end{smallmatrix}\right)}
\def\M{\operatorname{M}}
\def\P{\mathbb P}
\newcommand{\comp}{\begin{picture}(6,5)(-3,-2)\put(0,1){\circle{2}} \end{picture}}\def\circ{\comp}
\def\fN{\mathfrak N}
\def\fm{\mathfrak m}
\def\fn{\mathfrak n}
\newcommand{\ra}{\rightarrow}
\newcommand{\lra}{\longrightarrow}
\def\Xint#1{\mathchoice
{\XXint\displaystyle\textstyle{#1}}%
{\XXint\textstyle\scriptstyle{#1}}%
{\XXint\scriptstyle\scriptscriptstyle{#1}}%
{\XXint\scriptscriptstyle\scriptscriptstyle{#1}}%
\!\int}
\def\XXint#1#2#3{{\setbox0=\hbox{$#1{#2#3}{\int}$}
\vcenter{\hbox{$#2#3$}}\kern-.5\wd0}}
\begin{document}

\title[Periods of modular $\GL_2$-type abelian varieties and $p$-adic integration]{Periods of modular $\GL_2$-type abelian varieties\\ and $p$-adic integration}
\keywords{}

\author{Xavier Guitart}
\address{Departament d'Algebra i Geometria, Universitat de Barcelona, Catalonia}
\curraddr{}
\email{xevi.guitart@gmail.com}

\author{Marc Masdeu}
\address{Mathematics Institute, University of Warwick, United Kingdom}
\curraddr{}
\email{M.Masdeu@warwick.ac.uk}


\date{\today}

\dedicatory{}

\begin{abstract}
Let $F$ be a number field and $\fN\subset \cO_F$ an integral ideal. Let $f$ be a modular newform over $F$ of level $\Gamma_0(\fN)$ with rational Fourier coefficients. Under certain additional conditions, \cite{GMS2} constructs a $p$-adic lattice which is conjectured to be the Tate lattice of an elliptic curve $E_f$ whose $L$-function equals that of $f$. The aim of this note is to generalize this construction when the Hecke eigenvalues of $f$ generate a number field of degree $d\geq 1$, in which case the geometric object associated to $f$ is expected to be, in general, an abelian variety $A_f$ of dimension $d$. We also provide numerical evidence supporting the conjectural construction in the case of abelian surfaces.
\end{abstract}

\maketitle

\section{Introduction}\label{section 1}
Let $F$ be a number field and $\fN\subset \cO_F$ an integral ideal. Let $f$ be a modular newform over $F$ of level $\Gamma_0(\fN)$ with rational Fourier coefficients. Under certain additional conditions, \cite{GMS2} constructs a $p$-adic lattice which is conjectured to be the Tate lattice of an elliptic curve $E_f$ whose $L$-function equals that of $f$. The aim of this note is to generalize this construction when the Hecke eigenvalues of $f$ generate a number field of degree $d\geq 1$, in which case the geometric object associated to $f$ is expected to be, in general, an abelian variety $A_f$ of dimension $d$. That is, we attach to $f$ a lattice that conjecturally uniformizes $A_f$ over $\C_p$. In the particular case of abelian surfaces, we provide numerical verifications of the conjecture, as well as some further arithmetic applications.

To put this construction into perspective, let us first briefly recall the situation for classical modular forms over $\Q$. Let $f=\sum_{n\geq 1}a_nq^n$ momentarily denote a weight two newform for the congruence subgroup $\Gamma_0(N)\subset \SL_2(\Z)$, and let $K_f=\Q(\{a_n\}_{n\geq 1})$ be the field generated by the Hecke eigenvalues of $f$. A construction of Eichler and Shimura (cf. \cite[\S 7.5]{Sh-book}) associates to $f$ an abelian variety $A_f/\Q$ of dimension $d=[K_f\colon\Q]$ and conductor $N^d$ that satisfies the equality of $L$-functions
\begin{align}\label{eq: equalityLfunctions}
  L(A_f/\Q,s)= \prod_{i=1}^d L(\acc{\sigma_i} f,s),
\end{align}
where the $\sigma_i$ run over the embeddings of $K_f$ into $\Qb$, and $\acc\sigma f= \sum_{n\geq 1} \sigma(a_n)q^n$. Moreover, the algebra $\Q\otimes\End(A_f)$ of endomorphisms defined over $\Q$ is isomorphic to $K_f$. 

Recall that in this setting $\Gamma_0(N)$ acts on the complex upper half plane $\cH$, and the compactification of the quotient space $\Gamma_0(N)\backslash\cH$ are the $\C$-points of the modular curve $X_0(N)/\Q$. The Eichler--Shimura construction realizes $A_f$ quite explicitly as a simple factor of the Jacobian of $X_0(N)$, and in a manner that is amenable for numerical calculations. In particular, the lattice uniformizing $A_{f}\otimes \C$ can be given as
\begin{align}\label{eq: lambda_f}
  \Lambda_f = \left\{ \Big(\int_\gamma \omega_1,\dots, \int_{\gamma} \omega_d \Big) \colon \gamma\in H_1(X_0(N),\Z)\right\}\subset \C^d,
\end{align}
where $\omega_i$ denotes the differential form corresponding to $\acc{\sigma_i}f(z)dz$. This lattice can be efficiently computed by means of the modular symbols method \cite{cremona-book}, \cite{stein-book} and, in the case of dimension one, namely when $A_f$ is an elliptic curve, is the base for producing exhaustive tables of elliptic curves up to a certain conductor (currently, the Cremona database (\cite{cremonatables}) contain all elliptic curves up to conductor 380,000). There is also extensive literature on higher dimensional computations, specially in dimension $2$ (see, e.g., \cite{wang}, \cite{flynn-et-al}, \cite{GGG}, \cite{GGR}). When $A_f$ admits a principal polarization, some of these works also provide methods to compute equations of genus $2$ curves whose Jacobian is isogenous to $A_f$.
 
When $f$ is a modular form over a number field $F$ other than $\Q$, an abelian variety $A_f$ satisfying \eqref{eq: equalityLfunctions} is also expected to exist in general\footnote{Note that condition \eqref{eq: equalityLfunctions} characterizes $A_f$ up to isogeny; we will abuse notation and denote by $A_f$ any variety satisfying \eqref{eq: equalityLfunctions}.}. This is only known in some cases for totally real fields $F$, but if $F$ has some complex place the conjecture is completely open. We will state this conjecture more precisely in Section \ref{section 2} below, but in order to describe the contents of the article let us give a brief overview. 

We will only treat the case where $F$ has narrow class number one, so we make this assumption from now on. In this setting, the newform $f$ over $F$ can be identified with a harmonic differential form on the orbifold $$\Gamma_0(\fN)\backslash (\cH^r\times \HH^s),$$ where $r$ (resp. $s$) is the number of real (resp. complex) places of $F$ and $\HH=\C\times \R_{>0}$ denotes the hyperbolic upper half space. The only situation where the Eichler--Shimura construction over $\Q$ generalizes satisfactorily is when $F$ is totally real and $f$ admits a Jacquet--Langlands transfer to a modular form $f^B$ for some arithmetic subgroup $\Gamma_0^B(\fm)\subset B$, with $B/F$ a quaternion algebra that ramifies at all infinite places but one. The form $f^B$ corresponds in this case to a holomorphic differential form on $\Gamma_0^B(\fm)\backslash\cH$, which are the $\C$-points of a Shimura curve $X_B$ defined over $F$. Then $A_f$, as a variety over $F$, can be constructed as a quotient of the Jacobian of $ X_B$. Over $\C$, one can describe its period lattice as in \eqref{eq: lambda_f}; that is, as periods of the differentials attached to $f^B$ and its conjugates. 

When $F$ is totally real and $f$ does not admit a suitable Jacquet--Langlands lift (the simplest case where this happens is when $[F:\Q]=2$ and $\fN = \cO_F$) no construction of $A_f$ is known. However, a conjecture of Oda \cite{oda} describes the complex period lattice of $A_f$ in terms of certain periods of the Hilbert modular form $f$, and is the basis of the algorithm introduced in \cite{De} to compute equations of $A_f$ in the $1$-dimensional case. More recently, abelian surfaces of trivial conductor over real quadratic fields attached to Hilbert modular forms have been computed using this approach in \cite{De-Ku}, thus providing numerical evidence also for Oda's conjecture in higher dimension.

The situation seems to be even more mysterious when $F$ is not totally real. Indeed, in this case there is no apparent connection to algebraic geometry because $\Gamma_0(\fN)\backslash(\cH^r\times \HH^s)$ has no complex structure if $s>0$. Since no algebraic variety seems to present itself as a natural candidate to give rise to $A_f$, no geometric construction of $A_f$ is known in this context. In fact, to the best of our knowledge, such a construction has not even been conjectured.

In the one-dimensional case, there is a huge amount of experimental evidence supporting the existence of elliptic curves attached to modular forms over non totally real fields (see \cite{grunewald}, \cite{cremona}, \cite{whitley}, \cite{gunnells_5}, \cite{gunnells_23}, \cite{jones}, \cite{ellipticcurvesearch}). In this setting, \cite{GMS} contains two conjectural analytic constructions of the period lattice of the elliptic curve $A_f$. The first construction concerns the complex lattice of $A_f$, and is a generalization of Oda's conjecture to number fields having at least one real place, in the spirit of the work of Darmon--Logan \cite{darmon-logan} and Gartner \cite{gartner}. The second, which builds on constructions of Darmon \cite{darmon-integration} and Greenberg \cite{Gr} in the context of Stark--Heegner points, is for the $p$-adic Tate lattice of $A_f$. Numerical evidence for the $p$-adic construction, as well as an algorithm to compute the equation of the elliptic curve $A_f$ from its $p$-adic periods were presented in \cite{GMS2}.

The aim of the present note is to generalize the construction of the $p$-adic lattice of $A_f$ to the case where $d=\dim A_f  > 1$. In the case of $F=\Q$ this generalization was obtained by Dasgupta in~\cite{dasgupta-shpoints} and by Longo--Rotger--Vigni in~\cite{LRV} (see also the work~\cite{rotger-seveso} for a generalization to modular forms of higher weight). The construction that we present is valid for number fields of arbitrary degree and signature, under the assumption that there exists a prime $\fp$ dividing exactly the level of the modular form $f$. We show that in that case the abelian variety $A_f$, if it exists, admits a $\fp$-adic uniformization: there exists a lattice $\Lambda_f\subset (\C_p^\times)^d$ such that $$A_f(\C_p)\simeq (\C_p^\times)^d/\Lambda_f,$$ where $p=\fp\cap \Z$ and $\C_p$ is the completion of an algebraic closure of $\Q_p$. The main construction of the present article associates to the modular form $f$ a certain $p$-adic lattice $\Lambda_f'$ defined by means of a $p$-adic integration pairing, which can be regarded as a $p$-adic analog of \eqref{eq: lambda_f}. We then conjecture that $(\C_p^\times)^d/\Lambda_f'$ and $(\C_p^\times)^d/\Lambda_f$ are isogenous abelian varieties, thus providing a conjectural $p$-adic analytic construction of the abelian variety $A_f$ satisfying \eqref{eq: equalityLfunctions}.

 An alternative way of phrasing the conjecture above is as asserting the equality of the $\mathcal{L}$-invariant of $\Lambda_f'$ and the $\mathcal{L}$-invariant of $A_f$. Indeed, equality of the $\mathcal{L}$-invariants of two $p$-adic lattices of rank $d$ with an action of a number field of degree $d$ is equivalent to the corresponding rigid analytic tori being isogenous. The $\mathcal{L}$-invariant of $\Lambda_f'$ can then be regarded as an automorphic Darmon-style $\mathcal{L}$-invariant. The theme of relating automorphic $\mathcal{L}$-invariants to other $\mathcal{L}$-invariants, such as the geometric one, is also relevant for its connections with $p$-adic $L$-functions. The reader can consult \cite{darmon-integration}, \cite{Das}, \cite{greenberg-dasgupta}, \cite{LRV}, \cite{seveso}, or \cite{spiess} for results and conjecutures in this direction.

The paper is organized as follows. Section \ref{section 2} contains some background material on the relation between modular forms and certain cohomology classes, as well as to the abelian varieties conjecturally attached to them. We also record some results on $p$-adic uniformization and $p$-adic lattices. We describe the construction of the lattice that conjecturally uniformizes $A_f$ in Section \ref{section 3}, which in fact is a natural generalization of the elliptic curve case of \cite{darmon-integration}, \cite{Gr}, and \cite{GMS2}. One of the main points of the present note is to provide numerical evidence supporting the conjecture in the case where $\dim A_f = 2$, and this is the main content of Section \ref{section 4}: in \S\ref{Algorithms for computing the $p$-adic lattice} we discuss the algorithms that we used for the calculations (which work under the additional assumption that the number field has at most one complex place); in \S\ref{subsection: p-adic periods of genus two curves} we recall the formulas of Teitelbaum \cite{Tei} to compute $p$-adic lattices of genus two curves; and in \S\ref{subsection: a numerical verification} we report on the explicit computation of the $p$-adic lattice $\Lambda_f'$ of a modular form over a number field $F$ of signature $(1,1)$ whose Hecke eigenvalues generate a quadratic number field. We check (up to the working precision of $50$ $p$-adic digits) that this lattice is isogenous  with the $p$-adic lattice of a genus two curve whose Jacobian is $A_f$. Finally, in Sections \ref{section 5} and \ref{section 6}, we give two additional applications of our construction. The first one is to computing equations of genus two curves whose Jacobian is the variety $A_f$ attached to $f$, by means of the explicit uniformization formulas for genus two curves of \cite{Tei}.  The second is to the (conjectural) computation of the $p$-adic $L$-invariant of $A_f$. 

\subsection*{Notation} If $F$ is a number field we denote by $\cO_F$ its ring of integers, and we say that $F$ is of signature $(r,s)$ if it has $r$ real places and $s$ complex places. For an abelian variety $A$ defined over $F$, we denote by $\End(A)$ the endomorphisms of $A$ defined over $F$. For an extension $L/F$,  $A_L$ denotes the base change $A\times_{\operatorname{Spec} K}\operatorname{Spec} L$; consequently $\End(A_L)$ stands for the endomorphisms of $A$ defined over $L$.

\subsubsection*{Acknowledgments} We wish to thank Lassina Dembele, Ariel Pacetti, Haluk Sengun, John Voight, and Xavier Xarles for feedback and helpful conversations during this project. Masdeu thanks the Number Theory group of the University of Warwick for provinding an outstanding working environment, and Guitart is thankful to the Essen Seminar for Algebraic Geometry and Arithmetic for their hospitality during his stay. Guitart was supported by MTM2015-66716-P and MTM2015-63829, and Masdeu was supported by MSC--IF--H2020--ExplicitDarmonProg. This project has received funding from the European Research Council (ERC) under the European
Union's Horizon 2020 research and innovation programme (grant agreement No 682152).

\section{Modular forms, cohomology classes,  and $p$-adic uniformization}\label{section 2}
We begin this section by formulating a precise version of the conjecture that associates an abelian variety to any modular form over a number field,  mainly following the presentation of \cite{Taylor}. We describe it not only for modular forms over $\GL_2$, but over arbitrary quaternion algebras over $F$. By the Jacquet--Langlands correspondence, the systems of Hecke eigenvalues on quaternion algebras already arise on the split algebra $\GL_2$, so one does not gain much from a theoretical point of view. However, as we will see in Section \ref{section 4}, for computational purposes it is sometimes helpful to transfer the problem to a non-split quaternion algebra. 

The complex upper half plane $\cH$ is endowed with an action of $\PSL_2(\R)$ by fractional linear transformations. Similarly, the hyperbolic upper half space $\HH$ is acted on by $\PSL_2(\C)$ as follows: if we let $\mathcal{Q}=\C \oplus j\C$ denote Hamilton's quaternions and we identify $\HH$ with  $\{x + j y\in \mathcal{Q}\colon y\in \R_{>0}\}$, then a matrix acts on  $z\in \HH$ by the formula
\begin{align*}
\smtx{a}{b}{c}{d}\cdot z = (az + b)\cdot(cz+d)^{-1}\ \ \text{(the multiplication is in $\mathcal{Q}$).}
\end{align*}
In the construction of Section \ref{section 3} we will consider modular forms over $F$ with the property that there is a prime dividing $\fN$ exactly, so we introduce this assumption in the levels of the modular forms considered in this section. In fact, let us assume from now on that $\fN$  admits a factorization into coprime ideals of the form
\begin{align*}
  \fN = \fp \fd \fm, \ \ \text{ with $\fp$ prime and $\fd$ square-free. }
\end{align*}
 The condition that there exists a prime $\fp$ dividing $\fN$ exactly is inherent to this kind of construction, and it was already present in the works of Darmon and Greenberg \cite{darmon-integration} \cite{Gr}. The condition that $\fd$ is square-free arises because we want to translate the problem to a quaternion algebra. Let $B/F$ be a quaternion algebra of discriminant $\fd$ and which is split at $n\leq r$ real places of $F$. We remark that $\fd$ is allowed to be trivial. If in addition $n=r$, then $B$ is simply the matrix algebra $M_2(F)$.

Let $R_0(\fp\fm)\subset R_0(\fm)$ be Eichler orders in $B$ of levels $\fp\fm$ and $\fm$ respectively. Let $R_0(\fp\fm)_1^\times$ and $R_0(\fm)_1^\times$ denote their group of norm $1$ units and put $\Gamma_0(\fp\fm)=R_0(\fp\fm)_1^\times/\{\pm 1\}$ and $\Gamma_0(\fm)=R_0(\fm)_1^\times/\{\pm 1\}$.  By fixing isomorphisms $B\otimes_\sigma \R\simeq \M_2(\R)$ for all real places $\sigma$ of $F$ at which $B$ is split and $B\otimes_\sigma \C\simeq \M_2(\C)$ for all complex places  of $F$, the group  $\Gamma_0(\fp\fm)$ acts on $\cH^n \times \HH^s$.  Results of Harder \cite{harder-87} allow to interpret modular forms for $\Gamma_0(\fp\fm)$ either as harmonic differential forms on the quotient $\Gamma_0(\fp\fm)\backslash( \cH^n \times \HH^s)$ or, equivalently, as cohomology classes in the Betti cohomology group $H^{n+s}(\Gamma_0(\fp\fm)\backslash (\cH^n \times \HH^s),\C)$. We will take from now on the latter point of view.

For the sake of simplicity, we shall assume that $\Gamma_0(\fp\fm)$ is torsion free. This implies, in particular, that for any abelian group $A$ we have canonical isomorphisms
\begin{align*}
  H^{n+s}(\Gamma_0(\fp\fm)\backslash \cH^n \times \HH^s,A) \simeq H^{n+s}(\Gamma_0(\fp\fm),A)\simeq H^{n+s}(\Gamma_0(\fp\fm),\Z)\otimes A,
\end{align*}
where the group on the left is Betti cohomology and the others represent group cohomology.

For any $B^\times/F^\times$-module $V$ and any ideal $\fn\subset \cO_F$, the cohomology groups $H^{i}(\Gamma_0(\fn),V)$ are endowed with the action of the Hecke operators defined by means of the formalism of double coset operators (see, for example, \cite[\S 1.1]{ash-stevens}): for every prime $\fl$ of $F$ not dividing $\fd$ there is an endomorphism 
\begin{align*}
T_\fl\colon H^{i}(\Gamma_0(\fn),V) \lra H^{i}(\Gamma_0(\fn),V).
\end{align*}
In addition, for every infinite place $v$ of $F$ splitting in $B$ there is an involution $T_v$ of the same space. All these operators commute. We denote by $\mathbb{T}$ the Hecke algebra, by which we mean the free polynomial ring over $\Z$ generated $\{T_\fl\}_{\fl\nmid\fd}$ and $\{T_v\}$, regarded as formal variables. The Hecke algebra $\mathbb{T}$ acts on the cohomology groups $H^{i}(\Gamma_0(\fn),V)$ by letting each formal variable act as the corresponding Hecke operator.

We will say that a cohomology class $f\in H^{n+s}(\Gamma_0(\fp\fm),\C)$ is an eigenclass if for all primes $\fl\nmid \fd$ we have that 
\begin{align*}
T_\fl f = a_\fl(f) f,\ \  \text{ for some } a_\fl(f)\in \C.  
\end{align*}
 In this case, the field $K_f=\Q(\{a_\fl(f)\}_{\fl\nmid \fd})$ is a number field and, since we are considering subgroups of the form $\Gamma_0$, it is in fact totally real.

We will say that $f$ is \emph{trivial} if $a_\fl(f)$ equals $ |\fl| + 1$ for all $\fl\nmid \fp\fm\fd$ (here $|\fl|$ stands for the norm of $\fl$). Finally, if there does not exist any $g\in H^{n+s}(\Gamma_0(\fm ' ),\C)$ with $\fm'\mid \fp\fm$ and $T_\fl g = a_\fl(f)g$ for all but finitely many primes $\fl$, then we say that $f$ is new. The following conjecture attaches an abelian variety to any nontrivial new eigenclass; it was formulated essentially in this form in \cite{Taylor}, the only difference being that below we make explicit the expected relation between the level of the newform and the conductor of the abelian variety.


\begin{conjecture}\label{conjecture: taylor}
  Let $f\in H^{n+s}(\Gamma_0(\fp\fm),\C)$ be a nontrivial new eigenclass of level $\fN = \fp\fm\fd$. Then one of the following holds:
  \begin{enumerate}
  \item There exists a simple abelian variety $A_f/F$ of dimension $d=[K_f:\Q]$ and conductor $\fN^d$ such that  $\Q\otimes\End(A)$ contains $K_f$ and 
  \begin{align*}
    \# A_f(\cO_F/\fl)= N_{K_f/\Q}(1+|\fl| - a_\fl(f)) \text{ for all } \fl \nmid \fN;
  \end{align*}
\item There exists a simple abelian variety $A_f/F$ of dimension $2d$ and conductor $\fN^{2d}$, and a quaternion division algebra $D$ over $K_f$ such that $\Q\otimes\End(A_f)$ contains $D$ and 
  \begin{align*}
    \# A_f(\cO_F/\fl)= N_{K_f/\Q}(1+|\fl| - a_\fl(f))^2 \text{ for all } \fl \nmid \fN.
  \end{align*}
  \end{enumerate}
Moreover, $A_f$ does not have complex multiplication (CM) defined over $F$, and if $F$ has at least one real embedding then $(2)$ does not occur.
\end{conjecture}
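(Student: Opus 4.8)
This statement has two components of very different natures. The existence of $A_f$ in case $(1)$ or $(2)$ is an instance of the conjectural global Langlands correspondence, and I do not attempt it: it is a theorem only in special situations --- for $F=\Q$ it is the Eichler--Shimura construction, for $F$ totally real with $f$ admitting a Jacquet--Langlands transfer to a Shimura curve it is obtained by cutting $A_f$ out of a Jacobian, and for $F$ with a complex place it is entirely open. So the plan addresses only the two assertions in the ``Moreover'' sentence, and even there the second is only partly unconditional.

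\emph{No complex multiplication over $F$.} First I would reduce to a purely automorphic statement: the cuspidal automorphic representation $\pi$ of $\GL_2(\A_F)$ underlying $f$ (obtained via Jacquet--Langlands if $B\neq\M_2(F)$) is not dihedral, i.e.\ is not the automorphic induction of a Hecke character from a quadratic extension of $F$. Granting this: in case $(1)$ the compatible system attached to $\pi$, hence that of $A_f$, is not induced from any index-two subgroup of $G_F$, so $A_f$ has no CM even over $\Qb$, a fortiori not over $F$; in case $(2)$, simplicity of $A_f$ together with $\Q\otimes\End(A_f)\supseteq D$ and $\dim A_f = 2d = \tfrac12\dim_\Q D$ forces $\Q\otimes\End(A_f) = D$, a noncommutative division algebra, which cannot contain the degree-$2\dim A_f$ CM field required for complex multiplication. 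The input ``$\pi$ is not dihedral'' is where the hypothesis that $\fp$ divides $\fN$ exactly enters, together with the triviality of the nebentypus built into the $\Gamma_0$-level (equivalently, into $K_f$ being totally real): being new at $\fp$ with $\fp\,\|\,\fN$ forces $\pi$ to have conductor exactly $\fp$ at $\fp$, and the only representations of $\GL_2(F_\fp)$ of conductor exponent one with trivial central character are the unramified quadratic twists of the Steinberg representation; the Steinberg representation and its twists are not monomial --- their Langlands parameter is not induced from a character of a quadratic \'etale $F_\fp$-algebra --- whereas a dihedral $\pi$ has precisely such a local component at $\fp$. Hence no dihedral $\pi$ can match $f$.

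\emph{Case $(2)$ does not occur when $F$ has a real place.} A real place $v$ of $F$ forces $\pi_v$ to be the cohomological (weight two) discrete series of $\GL_2(\R)$, whatever the auxiliary algebra $B$: cohomological classes with trivial coefficients have this archimedean type, and Jacquet--Langlands carries the one-dimensional representation of a ramified $B_v^\times$ to the same discrete series. The alternative $(1)$ versus $(2)$ is measured by a Brauer class over $K_f$ --- the algebra $D$ itself --- expressing whether the rank-two compatible system (equivalently the conjectural rank-two motive) attached to $\pi$ is realised in dimension $d$ rather than $2d$. The plan is to compute its local invariants: at the archimedean places of $K_f$ the invariant is read off the sign/Hodge data of $\pi_\infty$, and a weight two discrete series at $v$ makes complex conjugation $c_v$ act on the two-dimensional system with eigenvalues $\{1,-1\}$ --- the ``oddness'' that forces these archimedean invariants to vanish --- while at the finite places the invariant is controlled by $\pi$ and its inner twists (Ribet's theory, suitably extended to general $F$), the Steinberg place $\fp$ again forbidding inner twists ramified there. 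The reciprocity law $\sum_w \operatorname{inv}_w(D)=0$ then forces $D$ to split, i.e.\ case $(1)$ to hold. For $F=\Q$ this degenerates to Eichler--Shimura, and for totally real $F$ admitting a Jacquet--Langlands transfer it follows from the Shimura-curve construction, whose $L$-function is visibly $\prod_i L(\acc{\sigma_i}{f},s)$ and not its square.

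\emph{Main obstacle.} The genuine obstacle is the existence of $A_f$, which is a case of Fontaine--Mazur and Langlands reciprocity and out of reach in general. Inside the ``Moreover'' clause the no-CM assertion is unconditional once one has the (elementary but essential) local computation at $\fp$; the argument against case $(2)$, by contrast, becomes unconditional only after one identifies the obstruction algebra $D$ intrinsically, computes all of its finite local invariants from $\pi$, and proves the vanishing of its archimedean invariants in the presence of a real place --- the last being the conceptual heart, and the whole being genuinely available only in the cases ($F$ totally real, and especially $F=\Q$) where $A_f$ is already known to exist.
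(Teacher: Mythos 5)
This statement is Conjecture~\ref{conjecture: taylor} in the paper: it is stated, attributed to Taylor, and \emph{not proved} --- so your decision to treat the existence of $A_f$ as out of reach matches the paper exactly, and there is no ``paper's proof'' to measure the existence part against. Where you and the paper diverge is in what is done around the dichotomy $(1)$/$(2)$. You propose an automorphic route: non-dihedrality of $\pi$ from the Steinberg local component at $\fp$ (correct, and a clean way to see no CM over $\Qb$ in case $(1)$), and an exclusion of case $(2)$ over fields with a real place via local invariants of the Brauer class of $D$ and inner-twist theory --- which you rightly flag as only genuinely available when $A_f$ is already known to exist. The paper instead works geometrically and \emph{conditionally on the conjecture}: immediately after the statement it makes your observation that simplicity forces $\Q\otimes\End(A_f)=D$, hence $D$ is of Albert type II or III; Proposition~\ref{prop: indefinite} then rules out totally indefinite $D$ using only the hypothesis $v_\fp(\fN^{2d})=2d$ (via N\'eron models, the conductor formula $v_\fp=2d(\codim V_\ell^{I_\fp}+\delta_\fp)$, and Tate's theorem that endomorphism algebras over finite fields are nonsplit at real places of their center), and Proposition~\ref{prop: cm} shows totally definite $D$ forces $f$ to be CM via Shimura's result that $A_f$ is then $\Qb$-isogenous to the square of a CM variety. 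The paper then simply \emph{assumes} $f$ is non-CM to conclude $\dim A_f=d$; it never proves the ``Moreover'' clause (neither the no-CM assertion nor the real-place exclusion of $(2)$), which remain part of the conjecture. So: your approach buys an unconditional non-dihedrality statement on the automorphic side, while the paper's buys an unconditional-given-$A_f$ reduction of case $(2)$ to the CM case that exploits the running hypothesis $\fp\,\|\,\fN$ rather than the real place. One caution: your parenthetical claim that the archimedean invariants of $D$ vanish by ``oddness'' is the genuinely hard point (over $\R$ abelian varieties can have quaternionic multiplication), and as stated it is a heuristic, not an argument; the paper avoids needing it precisely because Propositions~\ref{prop: indefinite} and~\ref{prop: cm} dispose of both ramification types of $D$ without reference to the infinite places of $F$.
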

We note that in \cite{Taylor} the conjecture is stated for modular forms of arbitrary level, not necessarily of the form $\fN=\fp\fm\fd$ with $\fp\mid\mid \fN$. Our running assumption that there exists a prime dividing the level exactly is necessary for our construction, but not for the (conjectural) existence of $A_f$. However, this condition is also useful in addressing an issue concerning the case in which $A_f$ belongs to the second case in Conjecture \ref{conjecture: taylor}. The point is that the construction of Section \ref{section 3} associates to the eigenclass $f$ a $\fp$-adic lattice in $(\C_p^\times)^d$, which is conjectured to be the lattice of $A_f$. But a lattice in $(\C_p^\times)^d$ can only correspond to an abelian variety of dimension $d$ (see \S \ref{subsection: uniformization} below). Therefore, if for a given $f$ the variety $A_f$ turned out to be of dimension $2d$ it would not be clear a priory what our construction would be giving, even conjecturally. This is the question that we shall address now.

Suppose that $A_f$ is as in case $(2)$ of Conjecture \ref{conjecture: taylor}. In particular, $\dim A_f=2d$ and there is an injection of the quaternion division algebra $D$ into $\End(A_f)\otimes \Q$. Observe that, in fact, this must be an isomorphism $D\simeq \End(A_f)\otimes \Q$. Indeed, if we let $D'=\End(A_f)$ then $D'$ is a division algebra, since $A_f$ is simple. Then $D'$ acts on $H_1(A_{f,\C},\Q)$ which has dimension $4d$ over $\Q$; this implies that $[D'\colon \Q]\leq 4d$ and since $[D\colon \Q]=4d$ we have $D=D'$. Therefore, since $D$ is the endomorphism algebra of a simple abelian variety it belongs to either type II or type III in Albert's classification (see~\cite[\S 21, Theorem 2]{mumford}). In particular, $D$ is either totally definite or totally indefinite. 

The next proposition will show that the case where $D$ is totally indefinite cannot occur, since we are assuming that the conductor of $A_f$ has valuation $2d$ at $\fp$.

\begin{proposition}\label{prop: indefinite}
Let $A/F$ be a simple abelian variety of dimension $2d$ and conductor $\fL$. Let $K$ be a totally real number field with $[K\colon \Q]=d$ and $D$ a totally indefinite quaternion division algebra over $K$ with $D\simeq \Q\otimes \End(A)$.  Then $v_\fp(\fL)\geq 4d$ for each prime $\fp$ dividing $\fL$.
\end{proposition}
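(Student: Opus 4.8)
The plan is to estimate the conductor exponent $v_\fp(\fL)$ at each bad prime $\fp$ by analysing the special fibre of the N\'eron model of $A$, exploiting the fact that the quaternionic multiplication by $D=\Q\otimes\End(A)$ forces that fibre to be purely additive. Concretely, fix $\fp\mid\fL$ of residue characteristic $p$; let $\mathcal A/\cO_{F_\fp}$ be the N\'eron model of $A$, let $G$ be the identity component of its special fibre over $\overline{\mathbb F}_p$, and let $t,a,u$ be respectively the toric, abelian and unipotent ranks of $G$ (so $t+a+u=\dim A=2d$), with $B$ the abelian quotient of $G$, $\dim B=a$. By Grothendieck's description of the conductor of an abelian variety in terms of its N\'eron model (see the work of Serre--Tate and SGA~7), one has $v_\fp(\fL)=2u+t+\delta_\fp$, where $\delta_\fp\ge 0$ is the Swan conductor. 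Hence it suffices to prove that $t=0$ and $a=0$, for then $u=2d$ and $v_\fp(\fL)=4d+\delta_\fp\ge 4d$.

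The vanishing of $t$ is soft. By the N\'eron mapping property $\End(A)$ acts on $\mathcal A$, hence on $G$, hence on its maximal subtorus $T$, hence contravariantly on the character lattice $X^{*}(T)$; tensoring with $\Q$ makes $X^{*}(T)\otimes\Q$ a module over the division algebra $D$, so its $\Q$-dimension is divisible by $\dim_\Q D=4[K:\Q]=4d$. Since $\dim_\Q(X^{*}(T)\otimes\Q)=t\le 2d$, we conclude $t=0$. For the vanishing of $a$, functoriality again yields a nonzero, hence (as $D$ is simple) injective, homomorphism $D\hookrightarrow\Q\otimes\End(B)$. The key claim is that this forces $\dim B\ge 2d$. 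Granting it, $a=\dim B\le 2d$ gives $a=2d$, whence $t=u=0$ and $G$ is an abelian variety, so $A$ has good reduction at $\fp$, contradicting $\fp\mid\fL$; therefore $a=0$, which completes the argument.

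It remains to establish the claim $\dim B\ge 2d$, and here both hypotheses on $D$ enter. Decompose $B$ up to isogeny into isotypic factors; since $D$ is simple it embeds into a single block $\M_e(\Delta)$, where $\Delta=\Q\otimes\End(B_0)$ is the endomorphism algebra of a simple isogeny factor $B_0$ of $B$ over $\overline{\mathbb F}_p$, and $\dim B\ge e\dim B_0$. By Honda--Tate, the centre $Z_0$ of the division algebra $\Delta$ is either $\Q$ --- in which case $B_0$ is a supersingular elliptic curve and $\Delta$ the quaternion $\Q$-algebra ramified exactly at $p$ and $\infty$ --- or a CM field, and in both cases $2\dim B_0=m\,[Z_0:\Q]$ with $m^2=\dim_{Z_0}\Delta$. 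If $Z_0=\Q$, embed $K=Z(D)$ into the central simple $\Q$-algebra $\M_e(\Delta)$ and consider the centraliser $C$ of $K$: because $K$ is totally real, the class $[C]\in\Br(K)$ --- which is the restriction to $K$ of the class of $\Delta$ --- is ramified at every archimedean place of $K$, whereas $[D]$ is split at all of them since $D$ is totally indefinite; hence the centraliser of $D$ inside $C$ is non-split, which forces $\deg_K C\ge 4$ and therefore $e\ge 2d$, so $\dim B\ge e\ge 2d$. If $Z_0$ is a CM field, the analogous centraliser computation inside $\M_e(\Delta)$ performed over the compositum $\tilde K=K Z_0$ --- which is again a CM field whose maximal totally real subfield $\tilde K^{+}=K Z_0^{+}$ contains $K$ --- gives, on unwinding the degrees, $\dim B\ge e\dim B_0\ge[\tilde K:\Q]=2[\tilde K^{+}:\Q]\ge 2[K:\Q]=2d$.

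The step I expect to be the main obstacle is exactly this claim $\dim B\ge 2d$ (equivalently, the vanishing of $a$). A naive dimension count does not suffice, because over $\overline{\mathbb F}_p$ the endomorphism algebra of a simple abelian variety can be much larger than twice its dimension --- the supersingular elliptic curve, whose endomorphism algebra has $\Q$-dimension $4$, being the basic example --- so one genuinely needs the Honda--Tate classification of the possible $\Delta$ together with both of the standing hypotheses, \emph{$K$ totally real} and \emph{$D$ totally indefinite}, in order to rule out an embedding of $D$ into $\Q\otimes\End(B)$ for an abelian variety $B$ of dimension strictly between $0$ and $2d$. (When $d=1$ this amounts to the familiar fact that a QM abelian surface with indefinite quaternionic multiplication cannot acquire an elliptic curve --- whether ordinary or supersingular --- in the abelian part of any of its reductions, since an indefinite quaternion algebra over $\Q$ is never isomorphic to the quaternion algebra ramified at $p$ and $\infty$.)
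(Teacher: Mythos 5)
Your proof is correct, but it takes a genuinely different route from the paper's. The paper first computes $v_\fp(\fL)=2d(\codim V_\ell^{I_\fp}+\delta_\fp)$ via the $2$-dimensional $\ell$-adic representation cut out by a maximal subfield of $D$, so that $v_\fp(\fL)$ is a multiple of $2d$ and only the borderline case $v_\fp(\fL)=2d$ must be excluded; in that case $\dim B=d$ exactly (by Serre--Tate, once $t=0$), a totally real maximal subfield $M\subset D$ with $[M:\Q]=2d$ is then forced to be a \emph{maximal} subfield of $\End^0(B_0^{n_0})$, and Tate's theorem that such endomorphism algebras never split at real places of their centre gives the contradiction. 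You instead bypass the Galois representation entirely, work with the conductor formula $v_\fp(\fL)=2u+t+\delta_\fp$, and prove the stronger structural statement that the reduction is purely additive with vanishing abelian rank, so that $u=2d$ and $v_\fp(\fL)=4d+\delta_\fp$. The price is that your key lemma --- $D$ cannot embed in $\Q\otimes\End(B)$ for $0<\dim B<2d$ over $\overline{\F}_p$ --- must handle \emph{all} intermediate dimensions, not just $\dim B=d$, which is why you need the centraliser/Brauer-group computation rather than the bare maximal-subfield count (the latter only yields $\dim B\ge d$). Both arguments kill the toric part identically, and in both the archimedean behaviour of $D$ enters through essentially the same fact (non-splitting of finite-field endomorphism algebras at real places, seen by you as $\operatorname{inv}_v\operatorname{Res}_{K/\Q}[B_{p,\infty}]=\tfrac12$). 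One remark: your CM case is compressed to ``unwinding the degrees''; the inequality $e\dim B_0\ge[\widetilde K:\Q]$ you assert there is equivalent to $[\widetilde K:Z_0]\le em/2$, which does hold but needs the observation that the degree-$2$ algebra $D\otimes_K\widetilde K$ sits unitally inside the centraliser of $\widetilde K$ in (the relevant corner of) $\M_e(\Delta)$, forcing that centraliser to have even degree --- worth spelling out, since a naive bound on commutative subalgebras only gives $\dim B\ge d$. Note also that in the CM case total indefiniteness is never used, only that $K$ is totally real; it is needed only in the supersingular case.
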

\begin{proof}
Let $\ell$ be a prime different from the residue characteristic of $\fp$. Let $V_\ell=T_\ell\otimes\Q_\ell$ be the rational $\ell$-adic Tate module of $A$, which is of rank $4d$ over $\Q_\ell$. Let $M$ be a maximal subfield of $D$; it is a quadratic extension of $K$ and therefore $[M:\Q]=2d$. Since $M$ is contained in $\End(A)\otimes \Q$, we have that $V_\ell$ is an $M\otimes\Q_\ell$ module of rank $2$ and $V_\ell\otimes_{\Q_\ell}\bar\Q_\ell$ breaks as the direct sum of $2d$ representations of dimension $2$, each of them conjugate to a given representation, say $\rho:\Gal(\bar F/F)\ra \Aut(V_\ell)$. The exponent at $\fp$ of the conductor of $\rho$ is equal to $\codim (V_\ell^{I_\fp})+ \delta_\fp$, where $I_\fp$ is the inertia at $\fp$ and $\delta_\fp$ is the Swan exponent. Therefore 
\begin{align*}
  v_\fp(\fL)=2d(\codim (V_\ell^{I_\fp})+\delta_\fp)
\end{align*}
and we see that $v_\fp(\fL)$ is a multiple of $2d$. We are assuming that $\fp$ divides $\fL$ and therefore that $A$ has bad reduction at $\fp$. By the criterion of N\'eron--Ogg--Shaffarevic this implies that $\codim(V_\ell^{I_\fp})\geq 1$ and therefore $v_\fp(\fL)\geq 2d$. In order to finish the proof, it is then enough to rule out the possibility that $v_\fp(\fL)=2d$; that is to say, we need to rule out the possibility that $\codim(V_\ell^{I_\fp})=1$ and $\delta_\fp=0$. 

Aiming for contradiction, assume that $\codim(V_\ell^{I_\fp})=1$ and $\delta_\fp=0$. Let $A'$ be the connected component of the special fiber of the N\'eron model of $A$ over $\cO_{F_\fp}$. It sits in an exact sequence
  \begin{align}\label{eq: chavelley}
    0 \lra T\times U\lra A' \lra B \lra 0,
  \end{align}
where $T$ is a torus, $U$ is a unipotent group, and $B$ is an abelian variety over the finite field $\cO_F/\fp$. If we let $t=\dim T$ and $u=\dim U$, then $t+u+\dim B = 2d$, implying $t \leq 2d$.

We claim that $A$ has potentially good reduction; that is to say, $t=0$ (cf. \cite[Theorem 3]{ribet-endomorphisms}). Indeed, if $t>0$ then by functoriality $D$ acts on $T$, and we get an inclusion 
\begin{align}\label{eq: inclusion of D}
D\hookrightarrow \End(T)\otimes \Q\simeq \M_t(\Q).
\end{align}
 We can interpret $\M_t(\Q)$ as $ \End_\Q(V)$, where $V$ is a $\Q$-vector space of dimension $t$. The inclusion \eqref{eq: inclusion of D} endows $V$ with the structure of a $D$-module. Now we see that 
 \begin{align*}
t=\dim_\Q (V) = \dim_D (V )\cdot \dim_\Q (D )= \dim_D (V)\cdot 4d 
 \end{align*}
and therefore $t\geq 4d$. But $t\leq 2d$ and this forces $t=0$, as we claimed.  

Therefore \eqref{eq: chavelley} can be written as
  \begin{align*}
    0 \lra U\lra A' \lra B \lra 0,
  \end{align*}
with $u+\dim B = 2d$. Since $\delta_\fp=0$ and $A$ has potentially good reduction, we have that $v_\fp(\fL)=2u$ (cf. \cite{ST} Theorem $4$ and the Remarks in page 500). Since we are assuming that $v_\fp(\fL)=2d$ this implies that $\dim B =d$.

Let $M$ be a maximal subfield of $D$, which we can choose to be totally real because $D$ is totally indefinite. By functoriality  we have an inclusion $D\hookrightarrow \End(B)\otimes \Q$. Let $B\sim B_0^{n_0}\times \cdots\times B_r^{n_r}$ be the decomposition of $B$ into simple varieties up to isogeny. Then $D'=\End(B_0^{n_0})\otimes \Q$ is a simple algebra with an embedding $D\hookrightarrow D'$. Let $L$ be the center of $D'$. By the results of Tate on endomorphism algebras of abelian varieties over finite fields \cite{Tate} we have that 
\begin{align*}
  2\dim(B_0^{n_0})=[L\colon\Q]\sqrt{[D'\colon L]}.
\end{align*}
See, for example, \cite[Theorem 8]{WM} where this is stated for simple varieties, from which the case of isotypic varieties follows directly. Therefore, since $\dim B_0^{n_0}\leq \dim B = d$ we see that  $[L\colon\Q]\sqrt{[D'\colon L]}\leq 2d$. But $[L\colon\Q]\sqrt{[D'\colon L]}$ is the dimension over $\Q$ of the maximal subfields of $D'$. Since we have an inclusion $D\hookrightarrow D'$, the field $M$ is also a subfield of $D'$, hence $M$ is a maximal subfield\footnote{In the context of $L$-simple algebras, by a subfield of $D'$ one understands a field contained in $D'$ and that contains $L$. But this is the case for $M$. Indeed, the compositum $LM$ is a subfield in this sense, and therefore its dimension over $\Q$ is $\leq 2d$; then it has to equal $2d$ and $LM=M$.} of $D'$ because $[M:\Q]=2d$. In particular $L$ is contained in $M$. Since $M$ is totally real, this implies that $L$ is totally real as well and that $D'$ is split at the real places of $L$ (because a maximal subfield of a simple algebra is a splitting field). But this contradicts \cite[Theorem 2 (d)]{Tate}, which asserts that the endomorphism algebra of an isotypic variety over a finite field does not split at any real place of the center.
\end{proof}

The same argument does not allow us to rule out the case where $D$ is totally definite. However, in this case $f$ is necessarily a modular form with complex multiplication. More precisely, the $L$-function of $A_f$ is a product of $L$-functions of Hecke characters of $F$.
\begin{proposition}\label{prop: cm}
Let $A/F$ be an abelian variety of dimension $2d$. Suppose that $D=\Q\otimes \End(A)$ is totally definite quaternion division algebra over a totally real number field $K$ and $[K\colon \Q]=d$.  Then there exist Hecke characters $\chi_i\colon \A_F^\times\ra \C^\times$ such that $L(A,s)=\prod_i L(s,\chi_i)$.  
\end{proposition}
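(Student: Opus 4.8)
The plan is to show that a totally definite quaternion division algebra $D$ over a totally real field $K$ of degree $d$ can only be the endomorphism algebra of an abelian variety $A/F$ of dimension $2d$ when $A$ has (potentially) complex multiplication, and then translate this into the factorization of $L(A,s)$. First I would invoke the theory of abelian varieties with quaternionic multiplication: since $\Q\otimes \End(A) = D$ has dimension $4d = \dim_\Q H_1(A_\C,\Q)$, the rational Hodge structure $H_1(A_\C,\Q)$ is a free $D$-module of rank one, so as a representation of $D\otimes_\Q \C$ it is the sum of the reduced representations, each appearing with multiplicity two at each archimedean place. The hypothesis that $D$ is totally definite forces, at every infinite place $v$ of $K$, the algebra $D\otimes_{K,v}\R$ to be Hamilton's quaternions $\mathbb{H}$; the constraint that $\mathbb{H}$ acts on the rank-two Hodge structure $H_1\otimes_{K,v}\R$ compatibly with a complex structure (i.e. that this is a polarizable $\R$-Hodge structure of type $\{(-1,0),(0,-1)\}$) is very restrictive.

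The key step is to extract from this a large commutative subalgebra acting on $A$. Concretely, I would argue as in Shimura's analysis of the totally definite case (cf.\ Shimura, \emph{On analytic families of polarized abelian varieties}, or \cite[\S 21]{mumford}): when $D$ is totally definite, the abelian variety $A$ is isogenous to a power $B^m$ of an abelian variety $B$ whose endomorphism algebra contains a CM field $L$ with $[L:\Q] = 2\dim B$, i.e.\ $B$ has CM by $L$. Equivalently, one can base change to $\Qb$ (where good reduction becomes potentially available and the quaternionic structure may be refined), choose a maximal subfield $M\subset D$ — which is a CM field of degree $2d$ over $\Q$ since $D$ is totally definite, as each local component $\mathbb{H}$ is split only by imaginary quadratic extensions of $\R$ — and show that $M$ acting on $A$ realizes $A$ as a CM abelian variety with CM field a field of degree $2\dim A = 4d$ obtained from $M$ (again because the $D$-module $H_1$ has $M$-rank $2d = \tfrac12\dim A$, forcing each of the two $M$-isotypic summands to carry a CM structure by a suitable quadratic extension of $M$).

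Once $A$ is established to have CM by a field $E$ with $[E:\Q] = 2\dim A = 4d$, the factorization of the $L$-function is standard. The CM structure attaches to $A$, via the main theorem of complex multiplication (see Shimura--Taniyama, or \cite{mumford} for the geometric setup), a collection of algebraic Hecke characters $\chi_i$ of $F$ (indexed by the $2d$ archimedean components, each contributing a pair) such that the compatible system $\{V_\ell(A)\}$ decomposes as $\bigoplus_i \chi_i$ as a Galois representation over $\Qb_\ell$, whence $L(A,s) = \prod_i L(s,\chi_i)$; here one uses that $A$ acquires CM over $F$ itself or over an abelian extension, and descends the characters to $\A_F^\times$ using that $\End(A) = \End_F(A) = D$ is already defined over $F$, so the Galois action on $V_\ell(A)$ commutes with $E$ and the $\chi_i$ are genuinely Hecke characters of $F$ (possibly after noting that the $\chi_i$ for different embeddings are Galois conjugate, which only permutes the factors).

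The main obstacle I anticipate is the second step: ruling out that a totally definite $D$ acts on $A$ \emph{without} $A$ being of CM type. The cleanest route is via reduction modulo a prime of good reduction and Tate's theorem, exactly in the spirit of the proof of Proposition \ref{prop: indefinite}: by Conjecture-independent facts one can find (after a finite extension of $F$) a prime of potentially good reduction, and then the reduction $\bar A$ is an isotypic abelian variety over a finite field on which $D$ acts; Tate's classification then pins down the center and index of $\End(\bar A)\otimes\Q$ and, crucially, forces the presence of a CM field of the right degree, which by the specialization map $\End(A)\otimes\Q \hookrightarrow \End(\bar A)\otimes\Q$ combined with a lifting argument (or directly via the observation that a totally definite quaternion algebra over a CM-free base cannot be the full endomorphism algebra of a simple abelian variety in characteristic zero unless it comes from CM) gives the claim. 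I would cite Shimura and Tate rather than reprove this. The archimedean bookkeeping — matching the number of characters $\chi_i$ to $2\dim A = 4d$ and checking their infinity types are consistent — is routine but should be stated carefully.
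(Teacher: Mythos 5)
Your overall route is the paper's: both arguments rest on Shimura's result (Proposition 15 of \emph{On analytic families of polarized abelian varieties}) that a totally definite quaternion algebra of the stated size forces $A$ to be $\Qbar$-isogenous to the square (more precisely, a power) of a CM abelian variety, followed by the theory of complex multiplication to factor the $L$-function. Your closing suggestion of rerouting through reduction modulo a prime and Tate's theorem is an unnecessary detour here — that is the mechanism the paper uses for the totally \emph{indefinite} case (Proposition \ref{prop: indefinite}); for the totally definite case Shimura's archimedean result already does the work, and a lifting argument from characteristic $p$ would be strictly harder.

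The genuine gap is in your descent step. You assert that, because $\End(A)\otimes\Q=D$ is defined over $F$, ``the Galois action on $V_\ell(A)$ commutes with $E$'' for the degree-$4d$ CM field $E$, so that the resulting characters are Hecke characters of $F$. This cannot be right: $D$ contains no commutative subfield of degree larger than $2d$ over $\Q$, so a field $E$ with $[E:\Q]=4d=2\dim A$ is necessarily generated by endomorphisms that are \emph{not} all defined over $F$. Concretely, the paper takes $E=MN$, where $M\subset D$ is a maximal subfield chosen with $M\not\supset N$ and $N=Z(\End(A_{\Qbar})\otimes\Q)$ is defined only over a nontrivial extension $L/F$. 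Hence $\Gal(\overline F/F)$ does not commute with $E$; only $\Gal(\overline F/L)$ does, and the main theorem of complex multiplication a priori yields Hecke characters of $L$, not of $F$. Your parenthetical about Galois conjugates merely permuting the factors is the right instinct but is not a proof: $L(A/F,s)$ is not simply a product of the $L$-factors one sees over $L$. What is needed is a descent theorem for abelian varieties with potential CM, and this is exactly what the paper invokes: one verifies the hypotheses of Milne's Theorem 4 in \cite{milne} — namely that $[E:\Q]=2\dim A$ and that $E$ is stable under $\Gal(L/F)$ (which holds because $N$ is the center of $\End(A_{\Qbar})\otimes\Q$, hence Galois-stable, and $M\subset D$ consists of endomorphisms already defined over $F$). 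Without this step the factorization into Hecke characters of $\A_F^\times$ does not follow.
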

\begin{proof}
Since $D$ is totally definite and $2[K\colon\Q]=\dim A$, a theorem of Shimura \cite[Proposition 15]{shimura-analytic-families} ensures that $A$ is $\Qbar$-isogenous to the square of a CM abelian variety $B$ of dimension $d$. Now the idea is to use the well known relation between $L$-functions of CM abelian varieties and Hecke characters. But we have to be careful in this case, because the complex multiplication of $A$ is not defined over $K$. Next, we will show that $A$ satisfies the hypothesis of \cite[\S 3]{milne}, and then Theorem 4 of loc. cit. implies the conclusion of the proposition.

To begin with, we remark that $B$ might not be simple. But it easy to see that $A_\Qbar$ is isotypical. Indeed, if $A_\Qbar\sim B_1\times B_2$ where $B_1$ and $B_2$ are not isogenous, then $D$ would act on each $B_i$; in particular it would act on $H_1(B_i,\Q)$ which have dimension $<4d$, and since $[D\colon \Q]=4d$ this is not possible. Therefore, we see that necessarily $A_{\Qbar} \sim C^r$ for some simple CM variety $C$. 

If we let $N=\End(C)$ then $\End(A_\Qbar)\simeq \M_r(N)$ and we can identify $N$ with $Z(\End(A_\Qbar))$. Let $L$ be the smallest extension of $F$ such that $Z(\End(A_\Qbar))\subset \End(A_L)$. We choose also a maximal subfield $M\subset D$ with the property that $M$ does not contain $N$.  This is possible because the intersection of two maximal subfields of $D$ is $K$, and $K$ does not contain $N$ because $N$ is a CM field and $K$ is totally real. We  identify $M$ with a subfield of $\Q\otimes\End(A_\Qbar)$ by means of the embedding $D\hookrightarrow \Q\otimes\End(A_\Qbar)$, and we let $E=M N$. It is a subfield of $\End(A_\Qbar)$, because $N$ commutes with $M$. Moreover, we have that $[E\colon \Q]=4d$; indeed, on the one hand $2d\mid [E:\Q]$ and since $N\not \subset M$ necessarily $[E:\Q]>2d$, but on the other hand $[E:\Q]\leq 2\dim A=4d$. 

Therefore, we have constructed a field $E$ such that $A_L$ has complex multiplication by $E$ defined over $L$. Moreover, $E$ is stable under the action of $\Gal(L/F)$. This is because $N$ is the center of $\End(A_\Qbar)$ and is stable, and $M$ consists on endomorphisms defined over $K$ and it is also stable. Therefore, we are in the assumptions of \cite[Theorem 4]{milne} (they are stated in the last paragraph of p. 186), and the proposition follows from this theorem. 
\end{proof}

We assume for the rest of the article  that $f$ does not have CM. In view of propositions \ref{prop: cm} and \ref{prop: indefinite}, this implies that $A_f$ has dimension $d$, rather than $2d$.

The next step is to show that $A_f$ has purely multiplicative reduction at $\fp$ (this implies that $A_f$ admits a $\fp$-adic uniformization, see \S \ref{subsection: uniformization} below). As before let $A'$ be the connected component of the special fiber of the N\'eron model of $A_f$ over $\cO_{F_\fp}$, which sits in an exact sequence
  \begin{align}
    0 \lra T\times U\lra A' \lra B \lra 0,
  \end{align}
where $T$ a torus, say of dimension $t$, $U$ is a unipotent group, and $B$ is an abelian variety over the finite field $\cO_F/\fp$. The variety $A_f$ is said to have purely multiplicative reduction at $\fp$  if $t=d$. 

Recall our running assumption that $\fp||\fN$, which implies that the exponent of $\fp$ in the conductor of $A_f$ is $d$.  It is a well known result that if an elliptic curve has exponent at $\fp$ of the conductor equal to $1$, it has multiplicative reduction at $\fp$. The following is a generalization of this statement to dimension $d>1$, under the assumption that there is a totally real number field of degree $d$ acting on the abelian variety.

\begin{proposition}
  Let $A/F$ be an abelian variety of dimension $d$, equipped with an embedding $K\hookrightarrow \End(A)\otimes\Q$ of a totally real number field $K$ of degree $d$ over $\Q$. Suppose that $\fp$ is a prime such that the exponent of the conductor of $A$ at $\fp$ is $d$. Then $A$ has purely multiplicative reduction at $\fp$.
\end{proposition}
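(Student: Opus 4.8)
The plan is to mimic the argument used to prove Proposition~\ref{prop: indefinite}. First I would fix a prime $\ell$ different from the residue characteristic $p$ of $\fp$, set $V_\ell = T_\ell(A)\otimes\Q_\ell$, a $\Q_\ell$-vector space of dimension $2d$, and write $I_\fp$ for the inertia subgroup at $\fp$. Because $K$ acts on $V_\ell$ and $H_1(A_\C,\Q)$ is a $2$-dimensional $K$-vector space, $V_\ell$ has rank $2$ over $K\otimes\Q_\ell$, so $V_\ell\otimes_{\Q_\ell}\bar\Q_\ell$ decomposes as a direct sum $\rho_1\oplus\cdots\oplus\rho_d$ of $2$-dimensional representations of $\Gal(\bar F/F)$, each conjugate to a fixed one $\rho$. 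The exponent at $\fp$ of the conductor of $A$ is additive in the $\rho_i$ and invariant under conjugation, so the hypothesis translates into
\begin{align*}
  d = v_\fp(\operatorname{cond}A) = d\bigl(\codim(\rho^{I_\fp}) + \delta_\fp\bigr),
\end{align*}
where $\delta_\fp$ is the Swan exponent of $\rho$ at $\fp$; hence $\codim(\rho^{I_\fp}) + \delta_\fp = 1$. Since $A$ has bad reduction at $\fp$ (the conductor exponent there being $d\geq 1$), the criterion of N\'eron--Ogg--Shafarevich forces $\codim(\rho^{I_\fp})\geq 1$, so $\codim(\rho^{I_\fp}) = 1$ and $\delta_\fp = 0$; in particular $V_\ell$ is tamely ramified at $\fp$.

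Next I would translate this into the structure of the N\'eron model. Let $A'$ denote the connected component of the special fibre of the N\'eron model of $A$ over $\cO_{F_\fp}$, which sits in an exact sequence $0\to T\times U\to A'\to B\to 0$ with $T$ a torus of dimension $t$, $U$ a unipotent group of dimension $u$, and $B$ an abelian variety, so that $t + u + \dim B = d$; by the N\'eron mapping property $K$ acts compatibly on $A'$ and on each of $T$, $U$, $B$. Since $V_\ell$ is tamely ramified at $\fp$, the formula for the conductor exponent in terms of the reduction type (cf.\ \cite{ST}) gives $d = v_\fp(\operatorname{cond}A) = t + 2u$, whence $t = d - 2u$ and, combining with $t + u + \dim B = d$, $\dim B = u$. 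If $u = 0$ then $t = d$, i.e.\ $A$ has purely multiplicative reduction at $\fp$, and we are done. So the problem reduces to excluding the case $u > 0$. Assuming $u > 0$, the action of $K$ on $T$ makes $X_*(T)\otimes\Q$ a $K$-vector space, so $d\mid t$; but $0\leq t = d - 2u < d$, which forces $t = 0$ and $u = d/2$.

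It then remains to derive a contradiction from $u = d/2 > 0$. In that case $\dim B = d/2$ and there is an embedding $K\hookrightarrow\End(B)\otimes\Q$ with $[K\colon\Q] = 2\dim B$. As $K$ is a field it lands inside the endomorphism algebra of a single isotypic component of $B$, whose dimension is at most $\dim B$; comparing dimensions forces $B$ to be isotypic and $K$ to be a maximal subfield of $\End(B)\otimes\Q$. Consequently the centre $L$ of $\End(B)\otimes\Q$ is contained in $K$, hence is totally real, and $\End(B)\otimes\Q$ splits at every real place of $L$ (a maximal subfield is a splitting field) --- which contradicts \cite[Theorem 2 (d)]{Tate}. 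Therefore $u = 0$, and $A$ has purely multiplicative reduction at $\fp$. I expect this last step to be the main obstacle: the conductor bound alone does not rule out a mixed reduction type, and eliminating it requires the same device as at the end of the proof of Proposition~\ref{prop: indefinite}, namely combining the conductor formula with Tate's classification of endomorphism algebras of abelian varieties over finite fields, after first using the $K$-action on the toric part to reduce to the case of trivial toric part with a half-dimensional abelian part.
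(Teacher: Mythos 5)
Your proof is correct, but the decisive step is carried out by a genuinely different mechanism than the paper's. The first half is the same: decomposing $V_\ell\otimes_{\Q_\ell}\bar\Q_\ell$ into $d$ conjugate two-dimensional pieces forces $\codim(\rho^{I_\fp})=1$ and $\delta_\fp=0$, and the $K$-action on the torus gives $d\mid t$. The paper then finishes by a local Galois-theoretic argument: by Ribet's lemma $\det\rho$ is the unramified cyclotomic character, so $\rho|_{I_\fp}=\smtx{1}{\star}{0}{1}$ with $\star$ nontrivial, hence inertia has infinite image, hence by Serre--Tate the reduction is not even potentially good, and $d\mid t$ with $t>0$ forces $t=d$. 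You instead feed tameness into the conductor formula (exponent $=t+2u$), obtain $t\in\{0,d\}$ from the divisibility, and eliminate the residual case $t=0$, $u=\dim B=d/2$ by the same device as in the proof of Proposition~\ref{prop: indefinite}: $K$ would be a totally real maximal subfield of the (necessarily isotypic) endomorphism algebra of $B$ over the residue field, splitting it at the real places of its centre, against Tate's Theorem 2(d). Both routes are valid; yours avoids Ribet's determinant lemma and Serre--Tate's Theorem 2 but needs the Néron-model conductor formula and Tate's theorem over finite fields, and it settles odd $d$ immediately since $u=d/2$ is then impossible. Two small points to tighten: the phrase ``$K$ lands inside a single isotypic component'' should rather say that $K$ injects into \emph{each} factor of $\End(B)\otimes\Q$ (which is what your dimension count actually uses to force isotypy); and, as in the footnote to Proposition~\ref{prop: indefinite}, one should justify that the compositum $LK$ is a field of degree $2\dim B$ over $\Q$, whence $L\subseteq K$ and $L$ is totally real.
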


\begin{proof}
First of all we claim that if $A$ does not have potentially good reduction at $\fp$, then it has purely multiplicative reduction. Indeed, if $A$ does not have potentially good reduction then $t>0$. By functoriality $K$ acts on the torus $T$, and we get an inclusion $$K\hookrightarrow \End(T)\otimes \Q\simeq M_t(\Q).$$ This implies that $d\leq t$, and since we already know that $t\leq d$ this proves the claim.

Therefore, in order to finish the proof it is enough to show that  $A$ does not have potentially good reduction. Let $\ell$ be a prime different from $\fp\cap\Z$ and consider the Tate module $V_\ell(A)=T_\ell(A)\otimes\Q$. Since $[K\colon\Q]=\dim A$ we know that $V_\ell(A)\otimes\bar\Q_\ell$ is the direct sum of $d$ representations of dimension $2$, all conjugate to a given one, say to $\rho\colon \Gal(\bar F/F)\to V_\ell$. Since they are all conjugate, each of them has conductor exponent at $\fp$ exactly $1$. The exponent of the conductor of $\rho$ at $\fp$ is $\codim (V_\ell^{I_\fp})+\delta_\fp$, where $I_\fp$ is the inertia subgroup of any extension of $\fp$ to $\bar K$ and $\delta_\fp$ is the Swan part of the exponent.  By the criterion of N\'eron--Ogg--Shafarevich, since $A$ has bad reduction at $\fp$ necessarily $\codim (V_\ell^{I_\fp})\geq 1$.  This implies that $\delta_\fp=0$ and  $\dim V_\ell^{I_\fp} = 1$. Thus $\rho_{|I_\fp}$ is of the form $\smtx{\psi}{\star}{0}{1}$ for some character $\psi$. Since $K$ is totally real,  by \cite[Lemma 4.5.1]{ribet-galois-action} the determinant of $V_\ell$ is the (unramified) cyclotomic character. Therefore $\psi=1$ and $\rho_{|I_\fp}=\smtx{1}{\star}{0}{1}$. In particular, the image of $I_\fp$ under $\rho$ is infinite. By \cite[Theorem 2]{ST} this implies that $A$ does not have potentially good reduction at $\fp$.
\end{proof}

\subsection{Uniformization and $p$-adic lattices}\label{subsection: uniformization} We next recall the basic facts that we will use on $p$-adic uniformization of abelian varieties. Let $A/F$ be an abelian variety of dimension $d$ with purely multiplicative reduction at $\fp$.  Then $A$ admits a $\fp$-adic uniformization: there exist free abelian groups $X,Y$, and a pairing
\begin{align}\label{eq: pairing}
i\colon  X \times Y \lra F_\fp^\times
\end{align}
such that the composition $\ord_\fp \circ i\colon X\otimes \Q \times Y\otimes \Q \ra \Q$ is a perfect pairing and induces an isomorphism
\begin{align*}
 A(\bar{F}_\fp)\simeq  \Hom(Y,\bar F_\fp^\times) / \Lambda,
\end{align*}
where $\Lambda$ is the image of $X$ in $\Hom(Y,\bar F_\fp^\times)$ under the map induced by $i$. The subgroup $$\Lambda\subset  \Hom(Y, F_\fp^\times)\simeq  (F_\fp^\times)^d$$ is a lattice, i.e., a free discrete subgroup of rank $d$.

We will be interested in how the lattices of isogenous abelian varieties are related. Suppose that $A$ and $A'$ are abelian varieties over $F_\fp$, uniformized by lattices $\Lambda$ and $\Lambda'$ in $ (F_\fp^\times)^d$. Let $\{v_1,\dots,v_d\}$ and $\{w_1,\dots,w_d\}$ be bases of $\Lambda$ and $\Lambda'$ respectively. Put $V=(v_{ij})\in M_d(F_\fp)$ and $W=(w_{ij})\in M_d(F_\fp)$ the matrices whose columns are the vectors of these bases. Following a notation introduced in \cite{kadziela}, for $B=(b_{ij})\in\M_d(\Z)$ we denote by $V^B$ the matrix with entries
\begin{align*}
 b_{ij}= v_{1j}^{b_{i1}}v_{2j}^{b_{i2}}\cdots v_{dj}^{b_{id}}.
\end{align*} 
Similarly, for $C=(c_{ij})\in\M_d(\Z)$ we denote by ${}^C W$ the matrix with entries
\begin{align*}
c_{ij}=  w_{i1}^{c_{1j}}w_{i2}^{c_{2j}}\cdots w_{id}^{c_{dj}}.
\end{align*}
Alternatively, these matrices can also characterized as follows. Let $\lambda\colon F_\fp^\times\to R$ be any group homomorphism of $F_\fp^\times$ to the additive group of a ring $R$, and for a matrix $U=(u_{ij})$ denote by $\lambda(U)$ the matrix with entries $\lambda(u_{ij})$; then
\begin{align}\label{eq: logs}
  \lambda(V^B)=B\lambda(V)\ \ \text{and} \ \ \lambda({}^C W)= \lambda(W) C.\quad\text{ for all $\lambda$.}
\end{align}
The following result of Kadziela characterizes isogenies of abelian varieties over $F_\fp$ in terms of their uniformizing lattices.
\begin{theorem}[\cite{kadziela}, Theorem $3$]\label{th: kadziela}
The abelian varieties $A$ and $A'$ are isogenous if and only if there exist matrices $B,C\in \M_d(\Z)$ such that $V^B={}^C W$.   
\end{theorem}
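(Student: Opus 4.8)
The plan is to recast the two abelian varieties as rigid analytic tori and reduce the statement to the functoriality of $\fp$-adic uniformization together with the classification of homomorphisms between split tori. Keeping the notation of \S\ref{subsection: uniformization}, fixing the bases of $X$ and $Y$ (and of their analogues $X'$, $Y'$ for $A'$) used to form $V$ and $W$ identifies $\Hom(Y,\cdot)$ with $(\cdot)^d$, so that $A$ and $A'$ become the rigid analytic tori $T=\mathbb{G}_m^d/\Lambda$ and $T'=\mathbb{G}_m^d/\Lambda'$ over $F_\fp$, where $\Lambda$ (resp.\ $\Lambda'$) is spanned by the columns $v_1,\dots,v_d$ of $V$ (resp.\ $w_1,\dots,w_d$ of $W$). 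I would then invoke two structural facts. First, by the theory of degenerating abelian varieties (Mumford, Raynaud, Bosch--L\"utkebohmert) this uniformization is functorial, so $\Hom(A,A')$ is identified with the group of rigid analytic homomorphisms $\phi\colon\mathbb{G}_m^d\to\mathbb{G}_m^d$ satisfying $\phi(\Lambda)\subseteq\Lambda'$, the isogenies corresponding to those $\phi$ which are isogenies of tori. Second, every such $\phi$ is algebraic, hence of the form $\phi_B\colon(x_1,\dots,x_d)\mapsto\big(\prod_k x_k^{b_{1k}},\dots,\prod_k x_k^{b_{dk}}\big)$ for a unique $B=(b_{ij})\in\M_d(\Z)$, and $\phi_B$ is an isogeny precisely when $\det B\neq 0$.

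Granting this, the rest is bookkeeping with the operations $V\mapsto V^B$ and $W\mapsto{}^CW$. Straight from the definitions, the $j$-th column of $V^B$ is $\phi_B(v_j)$, while the $j$-th column of ${}^CW$ is the element $w_1^{c_{1j}}\cdots w_d^{c_{dj}}$ of $\Lambda'$. Hence an identity $V^B={}^CW$ with $B,C\in\M_d(\Z)$ holds if and only if $\phi_B(v_j)\in\Lambda'$ for every $j$ (with $C$ recording the coordinates of the $\phi_B(v_j)$ in the basis $w_1,\dots,w_d$), i.e.\ if and only if $\phi_B(\Lambda)\subseteq\Lambda'$. For the forward direction I would start from an isogeny $A\to A'$, pass to the corresponding isogeny $T\to T'$, lift it to $\phi_B$ with $\phi_B(\Lambda)\subseteq\Lambda'$ and $\det B\neq 0$ using the two facts above, and read off $C$. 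For the converse, from $V^B={}^CW$ I would get $\phi_B(\Lambda)\subseteq\Lambda'$, hence a homomorphism $T\to T'$ and therefore a homomorphism $A\to A'$; if $\det B\neq 0$ this is an isogeny of tori, hence a surjective homomorphism of abelian varieties of the same dimension $d$, hence an isogeny.

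A point I would be careful about is that the statement implicitly requires $B$ (equivalently $C$) to be nonsingular: the choice $B=C=0$ makes both sides of $V^B={}^CW$ the all-ones matrix regardless of $A$ and $A'$, and must be discarded. I would handle this by applying the normalized valuation $\ord_\fp$ entrywise to any identity $V^B={}^CW$ and using \eqref{eq: logs}, which yields $B\,\ord_\fp(V)=\ord_\fp(W)\,C$. Since $\ord_\fp(V)$ and $\ord_\fp(W)$ are, up to transpose, the matrices of the perfect pairing $\ord_\fp\circ\, i$ of \eqref{eq: pairing} in the chosen bases, they are invertible over $\Q$; therefore $\det B=0$ if and only if $\det C=0$, so every solution is either nondegenerate---corresponding to a genuine isogeny---or trivial. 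One keeps only the former, equivalently one adds the hypothesis $\det B\neq 0$.

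The genuinely hard input, which I would simply cite, is the first structural fact: the existence and functoriality of the rigid analytic uniformization $0\to\Lambda\to\mathbb{G}_m^d\to A^{\mathrm{an}}\to 0$ of an abelian variety with split purely multiplicative reduction, the $\fp$-adic counterpart of the complex exponential uniformization $0\to\Lambda\to\C^g\to A^{\mathrm{an}}\to 0$, together with the rigidity statement that analytic homomorphisms of split tori are algebraic. This is legitimate to assume here, since \S\ref{subsection: uniformization} already imports the uniformization of $A$. With those granted, the argument is the bookkeeping above, the only genuine subtlety being to rule out the degenerate solution $B=C=0$.
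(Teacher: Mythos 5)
The paper offers no proof of this statement---it is imported verbatim from Kadziela's paper---so there is nothing internal to compare against; your argument is correct and is essentially the standard one (and, as far as I can tell, the one Kadziela gives): functoriality of the rigid-analytic/Mumford uniformization identifies isogenies $A\to A'$ with monomial maps $\phi_B$ of the ambient split torus carrying $\Lambda$ into $\Lambda'$ with $\det B\neq 0$, and the identity $V^B={}^C W$ is exactly the statement $\phi_B(v_j)\in\Lambda'$ for all $j$ with $C$ recording coordinates. Your observation that the statement as transcribed is vacuously satisfiable by $B=C=0$, and hence implicitly requires $\det B\neq 0$ (equivalently $\det C\neq 0$, by applying $\ord_\fp$ and using that $\ord_\fp(V)$, $\ord_\fp(W)$ are the matrices of the perfect pairing $\ord_\fp\circ i$), is a legitimate and worthwhile correction to the quotation.
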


\section{Integration pairing and construction of the lattice}\label{section 3}

Let $f\in H^{n+s}(\Gamma_0(\fp\fm),\C)$ be a new eigenclass and let $K_f=\Q(\{a_\fl(f)\})$ be the number field, say of degree $d$, generated by the eigenvalues of $f$. We suppose that $f$ is also an eigenclass with eigenvalue $+1$ for all the involutions at infinity\footnote{We remark that the construction of the lattice works for any choice of signs at infinity, and we expect Conjecture \ref{conj: main} to hold for any choice. However, our numerical experiments have been done for eigenclasses with eigenvalues $+1$ at infinity, so we prefer to consider this case.}; that is, $T_v f=f$ for all real places $v$ of $F$ that split in $B$. Then $f$ gives rise to a character of the Hecke algebra 
\begin{align*}
\lambda = \lambda_f\colon \mathbb{T}\lra \C,
\end{align*}
 via the formulas $\lambda(T_\fl)=a_\fl(f)$ for all $\fl\nmid  \fd$ and $\lambda(T_v)=1$ for the infinite places $v$. For any embedding $\sigma\colon K_f\hookrightarrow \C$ there is a conjugate newform $\acc\sigma f$, characterized by the fact that its character, that we will denote $\lambda_\sigma$, is given by $\lambda_\sigma(T_\fl)=\sigma(\lambda(T_\fl))$ and $\lambda_\sigma (T_v)=1$. 

We denote by 
\begin{align}\label{eq: f-component}
H^{n+s}(\Gamma_0(\fp\fm),\Q)^{f}\subset H^{n+s}(\Gamma_0(\fp\fm),\Q)
\end{align}
 the $\mathbb{T}$-irreducible subspace such that $f$ belongs to $ H^{n+s}(\Gamma_0(\fp\fm),\Q)^{f}\otimes\C$. Since $f$ is a newform,   by  \emph{multiplicity one} this space decomposes over $\C$ as the sum of $d$ one-dimensional $\mathbb{T}$-eigenspaces:
\begin{align*}
  H^{n+s}(\Gamma_0(\fp\fm),\Q)^{f}\otimes\C =\bigoplus_{\sigma:K_f\hookrightarrow \C}H^{n+s}(\Gamma_0(\fp\fm),\C)^{\lambda_\sigma},
\end{align*}
where for any $\C \otimes \mathbb{T}$-module $M$ and any character $\alpha:\mathbb{T}\ra \C$ we put
\begin{align*}
  M^\alpha = \{m\in M \colon Tm = \alpha(T)m\text{ for all } T\in\mathbb{T}\}.
\end{align*}
Define also
\begin{align*}
H^{n+s}(\Gamma_0(\fp\fm),\Z)^f=  H^{n+s}(\Gamma_0(\fp\fm),\Q)^{f} \cap  H^{n+s}(\Gamma_0(\fp\fm),\Z).
\end{align*}
The Hecke algebra also acts on the homology groups, with the same systems of Hecke eigenvalues. Similarly as before, we define $H_{n+s}(\Gamma_0(\fp\fm),\Q)^f$ to be the Hecke constituent such that the system of Hecke eigenvalues $\{a_\fl(f)\}_{\fl\nmid \fd}$ arises in $H_{n+s}(\Gamma_0(\fp\fm),\Q)^f\otimes \C$, and
\begin{align*}
H_{n+s}(\Gamma_0(\fp\fm),\Z)^f=  H_{n+s}(\Gamma_0(\fp\fm),\Q)^{f} \cap  H_{n+s}(\Gamma_0(\fp\fm),\Z).
\end{align*}
Note that both $H^{n+s}(\Gamma_0(\fp\fm),\Z)^f$ and $H_{n+s}(\Gamma_0(\fp\fm),\Z)^f$ are free abelian groups of rank $d$.

 In this section we will recall and slightly generalize the constructions of \cite{GMS2}, which can be formulated as the existence of a multiplicative integration pairing 
\begin{align}\label{eq: pairing between cohomology}
  \langle \cdot , \cdot \rangle : H^{n+s}(\Gamma_0(\fp\fm),\Z)^f\times H_{n+s}(\Gamma_0(\fp\fm),\Z)^f \lra F_{\fp^2}^\times, 
\end{align}
where $F_{\fp^2}$ stands for the quadratic unramified extension of $F_\fp$. We will then conjecture that this pairing can be identified with the uniformization pairing \eqref{eq: pairing} for $A_f$. The construction, which follows very closely the ideas introduced in \cite{Gr}, is based on passing to the cohomology of a certain $S$-arithmetic group $\Gamma$ related to $\Gamma_0(\fm)$ and $\Gamma_0(\fp\fm)$.

\label{def-of-gamma}Let $\cO_{F,\{\fp\}}$ denote the elements of $F$ with non-negative valuation at the primes different from 
$\fp$, and set  $$R=R_0(\fm)\otimes_{\cO_F}\cO_{F,\{\fp\}}.$$ As usual, $R_1^\times$ stands for the group of norm $1$ elements of $R$, and we denote by $\Gamma$ the image of $R_1^\times$ in $B^\times /F^\times$. 

Recall that $B$ splits at $\fp$. By fixing an isomorphism $B\otimes_F F_\fp \simeq \M_2(F_\fp)$ we can regard $\Gamma$ as a subgroup of $\PGL_2(F_\fp)$. Similarly as before, for any $\PGL_2(F_\fp)$-module $V$ the (co)homology groups $H^i(\Gamma,V)$ and $H_i(\Gamma,V)$ are equipped with the action of Hecke operators $T_\fl$ for $\fl\nmid \fp\fd$ and involutions at infinity $T_v$ for the infinite places of $F$ that split in $B$. 

Let $A$ be an abelian group. An $A$-valued measure on $\P^1(F_\fp)$ is a function
\begin{align*}
  \omega\colon \{ \text{Open compact subgroups of } \P^1(F_\fp)\}\lra A
\end{align*}
such that $\omega (U_1\cup U_2)=\omega(U_1) + \omega(U_2)$ if $U_1$ and $U_2$ are disjoint. We denote by $\Meas_0(\P^1(F_\fp),A)$ the set of such measures which in addition satisfy that $\omega(\P^1(F_\fp))=0$.  There is a natural action of $\PGL_2(F_\fp)$ on measures, and therefore also of $B^\times/F^\times$, by means of $(g\omega)(U)=\omega(g^{-1}U)$. In particular, there are Hecke operators acting on the cohomology groups $H^i(\Gamma,\Meas_0(\P^1(F_\fp),\Z))$. 

Let $\cH_\fp=F_{\fp^2}\setminus F_\fp$ denote the $F_{\fp^2}$-rational points of the $p$-adic upper half plane. Given $\omega \in\Meas_0(\P^1(F_\fp),\Z)$ and $x,y\in \cH_\fp$ the multiplicative integral $\Xint\times_{x}^{y}\omega$ is defined as
\begin{align*}
  \Xint\times_{x}^{y}\omega = \lim_\cU \prod_{U\in\cU} \left(\frac{t_U-y}{t_U-x}\right)^{\omega(U)}\in F_{\fp^2}^\times,
\end{align*}
where $\mathcal{U}$ runs over the coverings of $\P^1(F_\fp)$ by open-compacts with diameter tending to zero, and $t_U$ is any sample point in $U$. This can be seen as a pairing
\begin{align*}
\Xint\times\colon  \Meas_0(\P^1(F_\fp),\Z) \times \Div^0 \cH_\fp \lra F_{\fp^2}^\times,
\end{align*}
which induces a corresponding pairing in (co)homology, via cap product:
\begin{align*}
  \Xint\times \colon H^{n+s}(\Gamma,\Meas_0(\P^1(F_\fp),\Z)) \times H_{n+s}(\Gamma,\Div^0 \cH_\fp) \lra F_{\fp^2}^\times.
\end{align*}
Finally, we proceed to explain how this gives rise to the pairing \eqref{eq: pairing between cohomology}.

The first step is to show that there is a subspace of $H^{n+s}(\Gamma,\Meas_0(\P^1(F_\fp),\Q))$  which is isomorphic to $H^{n+s}(\Gamma_0(\fp\fm),\Q)^f$ as a Hecke module.  We will make use of the following notation: let $$I_f=\mathrm{Ann}_{\mathbb{T}}(H^{n+s}(\Gamma_0(\fp\fm),\Q)^f)$$ denote the annihilator of $\mathbb{T}$ acting on the irreducible space $H^{n+s}(\Gamma_0(\fp\fm),\Q)^f$; then, for any $\mathbb{T}$-module $M$ we define
\begin{align*}
  M^f = \bigcap_{T\in I_f}\ker(T)\subset M.
\end{align*}
Observe that the notation is consistent with the one introduced before, in the sense that $H^{n+s}(\Gamma_0(\fp\fm),\Q)^f$, as introduced in \eqref{eq: f-component}, indeed coincides with $\cap_{T\in I_f}\ker(T)$. The following is a generalization of \cite[Proposition 25]{Gr} and \cite[Proposition 4.5]{GMS}.
\begin{proposition}\label{prop: alpha}
  There is a natural Hecke equivariant map
  \begin{align*}
\rho:    H^{n+s}(\Gamma,\Meas_0(\P^1(F_\fp),\Q))\lra H^{n+s}(\Gamma_0(\fp\fm),\Q),
  \end{align*}
which induces an isomorphism
\begin{align*}
 \rho\colon   H^{n+s}(\Gamma,\Meas_0(\P^1(F_\fp),\Q))^f\simeq  H^{n+s}(\Gamma_0(\fp\fm),\Q)^f.
\end{align*}
\end{proposition}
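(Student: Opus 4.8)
The plan is to construct the map $\rho$ via the standard dictionary between harmonic cocycles/measures on the tree of $\PGL_2(F_\fp)$ and automorphic forms of two different levels at $\fp$, realized in group cohomology. First I would recall the exact sequence of $\PGL_2(F_\fp)$-modules
\begin{align*}
  0 \lra \Meas_0(\P^1(F_\fp),\Q) \lra \Meas(\P^1(F_\fp),\Q) \lra \Q \lra 0,
\end{align*}
and more usefully the identification of $\Meas_0(\P^1(F_\fp),\Q)$ with the $\Q$-valued harmonic cocycles on the Bruhat--Tits tree of $\PGL_2(F_\fp)$, together with the two standard projection maps "from the tree" given by evaluating a measure on the two halves $\P^1(F_\fp) = U_0 \sqcup U_\infty$ determined by the central edge. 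Concretely, the group $\Gamma$ is an $S$-arithmetic group which, by Serre's theory (the tree being the one for $\PGL_2(F_\fp)$), acts on the tree with the vertex stabilizers being conjugates of $\Gamma_0(\fm)$ and the edge stabilizers conjugates of $\Gamma_0(\fp\fm)$. This gives a Mayer--Vietoris / amalgam type long exact sequence relating $H^*(\Gamma,-)$ to $H^*(\Gamma_0(\fm),-)$ and $H^*(\Gamma_0(\fp\fm),-)$. Evaluating the coefficient module $\Meas_0(\P^1(F_\fp),\Q)$ at a suitable vertex and using Shapiro's lemma, one obtains the map $\rho$ to $H^{n+s}(\Gamma_0(\fp\fm),\Q)$; the Hecke-equivariance away from $\fp$ is automatic from the functoriality of the double-coset construction, once one checks the tree-theoretic maps commute with the $T_\fl$ for $\fl \nmid \fp\fd$.

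The second and substantive step is to identify the $f$-isotypic pieces. Here I would run the long exact cohomology sequence attached to the short exact sequence of coefficients above, or equivalently the Mayer--Vietoris sequence for the amalgam, apply $(-)^f$ (which is exact since it is cutting out a direct factor by an idempotent after tensoring with a large enough field, or more carefully by localizing at the maximal ideal $I_f$), and compare dimensions. On the $\Gamma_0(\fp\fm)$ side the constituent $H^{n+s}(\Gamma_0(\fp\fm),\Q)^f$ has dimension $d=[K_f:\Q]$; on the $\Gamma_0(\fm)$ side, because $f$ is \emph{new} at $\fp$ (it does not come from any $\fm' \mid \fp\fm$), the $f$-isotypic part of $H^{n+s}(\Gamma_0(\fm),\Q)$ vanishes. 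Feeding these vanishing statements into the long exact sequence forces $\rho$ to become an isomorphism on $f$-parts. One must also check that the "boundary" contribution $\Q$ (the trivial coefficient piece) contributes nothing to the $f$-part, which is exactly where the hypothesis that $f$ is \emph{nontrivial} enters: the character $\lambda_f$ differs from the character $T_\fl \mapsto |\fl|+1$, so the Eisenstein piece is killed by $(-)^f$.

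The main obstacle I expect is the bookkeeping at $\fp$: one needs the precise statement that, after passing to $f$-components, the two restriction-to-vertex-stabilizer maps $H^{n+s}(\Gamma_0(\fp\fm),\Q)^f \to H^{n+s}(\Gamma_0(\fm),\Q)^f$ coming from the two endpoints of an edge land in a space that vanishes, and simultaneously that no higher cohomology $H^{n+s+1}(\Gamma,-)$ obstruction appears on the $f$-part — in other words, controlling the degree-shifting terms in the Mayer--Vietoris sequence. This is precisely the point handled in \cite[Proposition 25]{Gr} in the $\GL_2/\Q$ (resp. one real place) setting and in \cite[Proposition 4.5]{GMS}; the generalization here is essentially formal once one notes that the tree argument at the single prime $\fp$ is insensitive to the signature $(r,s)$ of $F$ and to whether $B$ is split, because all of that data is absorbed into the fixed groups $\Gamma_0(\fm)$, $\Gamma_0(\fp\fm)$ and their (co)homology in degree $n+s$. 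I would therefore present the proof as: (i) set up the tree and the amalgam, (ii) write down $\rho$ and check Hecke-equivariance, (iii) invoke newness and nontriviality to kill the relevant terms after applying $(-)^f$, (iv) conclude by the five lemma / dimension count, citing \cite{Gr} and \cite{GMS} for the parts that transfer verbatim.
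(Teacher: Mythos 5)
Your proposal follows essentially the same route as the paper: construct $\rho$ from the tree/amalgam picture (the paper simply imports this as the map of \cite[Display (4.6)]{GMS}, which already lands in the $\fp$-new part and sits in a short exact sequence with kernel $\coker\alpha$), and then kill the extraneous terms after applying $(-)^f$ using that $f$ is cuspidal and new at $\fp$. The one place where you are vaguer than the paper is the exactness of $(-)^f$. As defined, $M^f=\Hom_{\mathbb{T}}(\mathbb{T}/I_f,M)$ is only left exact, and your two proposed justifications are not quite right as stated: localizing at $I_f$ is not the same functor as $\Hom_{\mathbb{T}}(\mathbb{T}/I_f,-)$, and the idempotent argument presupposes that the ambient module splits as a sum of generalized eigenspaces, which is essentially what has to be checked. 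The paper instead confronts the obstruction head on: it writes down the four-term sequence ending in $\operatorname{Ext}^1_{\mathbb{T}}(\mathbb{T}/I_f,\coker\alpha)$ and kills this $\operatorname{Ext}^1$ by an explicit splitting, using that $T_\fl-|\fl|-1$ annihilates the Eisenstein module $\coker\alpha$ while acting invertibly (by $a_\fl(f)-|\fl|-1\neq 0$) on $\mathbb{T}/I_f\simeq K_f$. You do correctly identify the underlying reason (disjointness of the Eisenstein eigensystem from $\lambda_f$), so this is a gap of execution rather than of idea; to make your argument complete you would need to supply this $\operatorname{Ext}^1$-vanishing (or an equivalent eigenspace-decomposition statement) explicitly.
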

\begin{proof}
  By \cite[Display (4.6)]{GMS} there is a Hecke equivariant homomorphism 
\begin{align*}
\rho\colon   H^{n+s}(\Gamma,\mathrm{HC}(\Q))\lra H^{n+s}(\Gamma_0(\fp\fm),\Q)^{\fp-\text{new}}.
\end{align*}
Here $\mathrm{HC}(\Q)$ stands for the module of $\Q$-valued harmonic cocycles on the Bruhat--Tits tree of $\PGL_2(F_\fp)$, which is isomorphic to $\Meas_0(\P^1(F_\fp),\Q)$. Moreover, there is a short exact sequence 
\begin{align*}
  0 \lra \coker \alpha \lra  H^{n+s}(\Gamma,\mathrm{HC}(\Q))\stackrel{\rho}{\lra} H^{n+s}(\Gamma_0(\fp\fm),\Q)^{\fp-\text{new}} \lra 0,
\end{align*}
where $\alpha$ is the map denoted as $\alpha_{n+s+1}$ in \cite{GMS}.  

To finish the proof, we follow the argument of \cite[Proposition 25]{Gr}. Indeed, for any $\mathbb{T}$-module $M$ we have that $M^f=\Hom_{\mathbb{T}}(\mathbb{T}/I_f,M)$. Applying the functor $\Hom_{\mathbb{T}}(\mathbb{T}/I_f,\cdot)$ to the above exact sequence we obtain
\begin{align*}
  0 \lra (\coker \alpha)^f \lra  H^{n+s}(\Gamma,\mathrm{HC}(\Q))^f\lra (H^{n+s}(\Gamma_0(\fp\fm),\Q)^{\fp-\text{new}} )^f \lra \operatorname{Ext}^1_{\mathbb{T}}(\mathbb{T}/I_f,\coker \alpha).
\end{align*}
To finish the proof, it remains to show that both $(\coker \alpha)^f$ and $\operatorname{Ext}^1_{\mathbb{T}}(\mathbb{T}/I_f,\coker \alpha)$ are zero.

Since cuspidal cohomology vanishes in degree $<n+s$, the Hecke operators $T_\fl$ act on $H^{n+s-1}(\Gamma_0(\fp\fm),\Q)$ as multiplication by $|\fl| + 1$. By construction, $\coker \alpha$ is a quotient of $H^{n+s-1}(\Gamma_0(\fp\fm),\Q)^2$; this implies that $T_\fl$ acts on $\coker \alpha$ as multiplication by $|\fl| + 1$. Since $f$ is cuspidal, we see that $(\coker\alpha)^f=0$. 

Finally, we show that $\operatorname{Ext}^1_{\mathbb{T}}(\mathbb{T}/I_f,\coker \alpha)^f = 0$ as well. Suppose that 
\begin{align}\label{eq: extension}
0\lra \coker \alpha \lra E\stackrel{\pi}{\lra}\mathbb{T}/I_f\lra 0
\end{align}
is a $\mathbb{T}$-module extension of $\mathbb{T}/I_f$ by $\coker\alpha$, and we will see that it splits. Let $s'$ be a section of $\pi$ as $\Q$-vector spaces. This might not be a $\mathbb{T}$-module homomorphism, but it can be modified to obtain a section of $\mathbb{T}$-modules:  observe that $T_\fl-|\fl|-1$ acts on $\mathbb{T}/I_f\simeq K_f$ as multiplication by $d_\fl:= a_\fl(f) - |\fl|-1$, and since $f$ is cuspidal   $d_\fl\neq 0$; then we define $s\colon \mathbb{T}/I_f\ra E$ as 
\begin{align*}
  s(x) = (T_\fl - |\fl|-1)s'(\frac{x}{d_\fl}).
\end{align*}
It is easy to check that $s$ is indeed a section of $\mathbb{T}$-modules, and therefore \eqref{eq: extension} splits.
\end{proof}

The group $\Gamma$ is isomorphic to the amalgamated product of two copies of $\Gamma_0(\fm)$ over $\Gamma_0(\fp\fm)$ (see, e.g., \cite{serre-trees}). The Mayer--Vietoris sequence in this setting \cite[Chapter II, \S 7]{brown} is then
\begin{align*}
 \cdots\lra  H_{n+s+1}(\Gamma_0(\fm),\Q)^2\lra H_{n+s+1}(\Gamma,\Q)\stackrel{\eta}{\lra}H_{n+s}(\Gamma_0(\fp\fm),\Q) \stackrel{\partial}{\lra} H_{n+s}(\Gamma_0(\fm),\Q)^2\lra\cdots
\end{align*}
\begin{lemma}\label{lemma: d}
  The map $\eta$ induces an isomorphism
\begin{align*}
\eta\colon H_{n+s+1}(\Gamma,\Q)^f\stackrel{\simeq}{\lra}H_{n+s}(\Gamma_0(\fp\fm),\Q)^f.
\end{align*}
\end{lemma}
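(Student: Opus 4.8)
The plan is to read the assertion off the Mayer--Vietoris sequence displayed just above the lemma, which is Hecke equivariant (for the operators $T_\fl$ with $\fl\nmid\fp\fd$ and the involutions at infinity), by passing to $f$-isotypic parts. Applying $M\mapsto M^f=\bigcap_{T\in I_f}\ker(T|_M)$ to the four-term exact piece
\[
H_{n+s+1}(\Gamma_0(\fm),\Q)^2\longrightarrow H_{n+s+1}(\Gamma,\Q)\stackrel{\eta}{\longrightarrow}H_{n+s}(\Gamma_0(\fp\fm),\Q)\stackrel{\partial}{\longrightarrow} H_{n+s}(\Gamma_0(\fm),\Q)^2,
\]
I would reduce the lemma to the two vanishing statements
\[
H_{n+s}(\Gamma_0(\fm),\Q)^f=0\qquad\text{and}\qquad H_{n+s+1}(\Gamma_0(\fm),\Q)^f=0.
\]
Granting these, $\partial$ annihilates $H_{n+s}(\Gamma_0(\fp\fm),\Q)^f$ (its image is an $I_f$-torsion submodule of a module with trivial $f$-part), so $\eta^f$ is onto; and since for the finite-length $\mathbb{T}$-modules in play $M^f=0$ forces $\Ext^1_{\mathbb{T}}(\mathbb{T}/I_f,M)=0$ (localize at the maximal ideal $I_f$, where $M$ becomes zero), one lifts preimages to conclude that $\eta^f$ is injective as well. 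This bookkeeping is of the same nature as in the proof of Proposition~\ref{prop: alpha}.

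The two vanishings are then the heart of the matter. By universal-coefficients duality the Hecke modules $H_j(\Gamma_0(\fm),\Q)$ and $H^j(\Gamma_0(\fm),\Q)$ carry the same systems of eigenvalues, so it is equivalent to show $H^{n+s}(\Gamma_0(\fm),\Q)^f=H^{n+s+1}(\Gamma_0(\fm),\Q)^f=0$. In degree $n+s$ this is precisely the hypothesis that $f$ is new: a nonzero class in $H^{n+s}(\Gamma_0(\fm),\Q)^f$ would force the system of eigenvalues of $f$ to occur already at level $\fm$ (a proper divisor of $\fp\fm$), which is excluded. Degree $n+s+1$ is the delicate point, because the definition of a newform constrains only the bottom cohomological degree $n+s$. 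Here I would argue as follows. Write $H^{n+s+1}(\Gamma_0(\fm),\C)=H^{n+s+1}_{\mathrm{cusp}}\oplus H^{n+s+1}_{\mathrm{Eis}}$ into cuspidal and non-cuspidal (Eisenstein plus residual) parts. A cohomological cuspidal automorphic representation contributes to $H^{j}_{\mathrm{cusp}}(\Gamma_0(\fm))$ with the same system of $T_\fl$-eigenvalues for every $j$ in the range $[n+s,n+2s]$; hence $\{a_\fl(f)\}$ occurs in $H^{n+s+1}_{\mathrm{cusp}}(\Gamma_0(\fm))$ only if it occurs in $H^{n+s}_{\mathrm{cusp}}(\Gamma_0(\fm))$ --- impossible by newness, or equivalently because the automorphic representation attached to $f$ has conductor $\fN=\fp\fm\fd$ with $\fp\nmid\fm\fd$, whereas cuspidal classes at level $\fm$ come from representations of conductor dividing $\fm\fd$ (and strong multiplicity one forbids coincidence of eigensystems). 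On the non-cuspidal part every eigensystem is of Eisenstein type, $a_\fl=\chi(\fl)+|\fl|\chi^{-1}(\fl)$ for a finite-order character $\chi$, and this cannot equal $\{a_\fl(f)\}$ since $f$ is cuspidal (in particular $d_\fl:=a_\fl(f)-|\fl|-1\neq 0$, as already used in the proof of Proposition~\ref{prop: alpha}). Therefore $H^{n+s+1}(\Gamma_0(\fm),\Q)^f=0$.

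Putting this together, the displayed sequence becomes, after applying $(-)^f$, a short exact sequence $0\to H_{n+s+1}(\Gamma,\Q)^f\stackrel{\eta}{\to}H_{n+s}(\Gamma_0(\fp\fm),\Q)^f\to 0$, which is the content of the lemma. The step I expect to be the main obstacle is the vanishing $H_{n+s+1}(\Gamma_0(\fm),\Q)^f=0$ when $s\geq 1$: it is the only place where one must appeal to the structure of cuspidal cohomology in degrees above the bottom one (and to strong multiplicity one), rather than to the bare definition of a newform. By contrast, when $s=0$ the argument collapses, since then $\Gamma_0(\fm)$ has virtual cohomological dimension $n$ and $H_{n+1}(\Gamma_0(\fm),\Q)=0$ outright, so that $\eta$ is already injective.
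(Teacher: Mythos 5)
Your argument follows the same route as the paper's: pass to $f$-isotypic parts in the Mayer--Vietoris sequence and reduce everything to the vanishing of the $f$-part of the (co)homology of $\Gamma_0(\fm)$ in degrees $n+s$ and $n+s+1$ --- the former giving that $\partial$ kills $H_{n+s}(\Gamma_0(\fp\fm),\Q)^f$, the latter giving injectivity of $\eta$ on the $f$-part. The paper dispatches the degree-$(n+s+1)$ vanishing with the one-line remark that the eigensystem of $f$ cannot occur at level $\fm$, even though the definition of ``new'' only constrains degree $n+s$; your elaboration (constancy of cuspidal eigensystems across the range $[n+s,\,n+2s]$, and exclusion of the Eisenstein eigensystems because $a_\fl(f)\neq \chi(\fl)+|\fl|\chi^{-1}(\fl)$ for $f$ cuspidal) is a correct and genuinely more complete justification of that step.

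Two slips in the write-up, neither fatal. First, your exactness bookkeeping has the two ingredients playing reversed roles: $(\ker\eta)^f=0$, which follows from $H_{n+s+1}(\Gamma_0(\fm),\Q)^f=0$ because the eigensystems in a quotient are among those of the source, is what gives \emph{injectivity} of $\eta^f$ directly; the vanishing of $\Ext^1_{\mathbb{T}}(\mathbb{T}/I_f,\ker\eta)$ is what lets you lift an $f$-isotypic class of $\ker\partial$ to an $f$-isotypic class upstairs, i.e.\ it gives \emph{surjectivity}. (Since all modules here are finite-dimensional $\Q$-vector spaces, one can also bypass Ext entirely by decomposing into generalized eigenspaces over $\C$.) Second, the closing remark about $s=0$ is wrong in the cocompact case, which is the one occurring in the paper's examples over $\Q$: a torsion-free cocompact Fuchsian group $\Gamma_0(\fm)$ is a closed surface group of cohomological dimension $2$, so $H_{n+1}(\Gamma_0(\fm),\Q)\simeq\Q$ is not zero; its $f$-part still vanishes, but for the Eisenstein-type reason (the Hecke correspondence acts on the fundamental class by its degree $|\fl|+1$), not for dimension reasons.
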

\begin{proof}
We claim that $H_{n+s}(\Gamma_0(\fp\fm),\Q)^f$ lies in the kernel of $\partial$. Indeed, it is enough to show this for $H_{n+s}(\Gamma_0(\fp\fm))^f\otimes \C$; but this has a basis of eigenclasses which are new, and the fact that they are new at $\fp$ is equivalent to being in the kernel of $\partial$. Moreover, $\eta$ is injective because $H_{n+s+1}(\Gamma_0(\fm),\Q)^f = 0$ (since $f$ is a newform of level $\fp\fm$, its system of eigenvalues can not be found in lower level).
\end{proof}

Now consider the exact sequence of $B^\times/F^\times$ modules
\begin{align*}
  0 \lra \Div^0 \cH_\fp\lra \Div \cH_\fp \lra \Z \lra 0.
\end{align*}
It induces a long exact sequence in group homology; we are interested in the connecting homomorphism 
\begin{align*}
  \delta\colon H_{n+s+1}(\Gamma,\Z)\lra H_{n+s}(\Gamma,\Div^0 \cH_\fp).
\end{align*}
Finally, we define the pairing 
\begin{align}\label{eq: pairing 2}
  \langle \cdot , \cdot \rangle_f : H^{n+s}(\Gamma_0(\fp\fm),\Z)^f\times H_{n+s}(\Gamma_0(\fp\fm),\Z)^f \lra F_{\fp^2}^\times
\end{align}
as follows: given $\omega \in H^{n+s}(\Gamma_0(\fp\fm),\Z)^f$ and $\gamma \in  H_{n+s}(\Gamma_0(\fp\fm),\Z)^f$, then
\begin{align*}
  \langle \omega, \gamma \rangle_f = \Xint\times_{\delta \circ d^{-1}\gamma} \rho^{-1}(\omega). 
\end{align*}
We conjecture that the pairing \eqref{eq: pairing 2} uniformizes $A_f/F_\fp$ up to isogeny. To make this statement more explicit, let $\gamma_1,\dots.\gamma_d$ be a basis of $H_{n+s+1}(\Gamma_0(\fp\fm),\Z)^f$ and $\omega_1,\dots,\omega_d$ a basis of $H^{n+s+1}(\Gamma_0(\fp\fm),\Z)^f$. Define $\Lambda_f'\subset (F_{\fp^2}^\times)^d$ to be the subgroup generated by the $d$ vectors
\begin{align}\label{eq: basis of Lambda}
 \left( \langle \omega_i, \gamma_1, \rangle, \dots, \langle \omega_i, \gamma_d, \rangle  \right),\ \ \  i =1,\dots, d .
\end{align}
Also, denote by $\Lambda_f\subset (F_\fp^\times)^d$ the $p$-adic lattice of $A_f$.

\begin{conjecture}\label{conj: main}
  The vectors of \eqref{eq: basis of Lambda}  belong to $(F_\fp^\times)^d$, and $(F_\fp^\times)^d/\Lambda_f'$ is isogenous to $(F_\fp^\times)^d/\Lambda_f$.
\end{conjecture}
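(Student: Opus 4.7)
The plan is to split the conjecture into two sub-claims and attack them separately. The first is the integrality statement that each $\langle\omega_i,\gamma_j\rangle$ lies in $F_\fp^\times$ rather than merely in $F_{\fp^2}^\times$; I would attempt this by a Galois descent argument, using that the $+1$-eigenvalue condition at the infinite places combined with the rationality of the Hecke eigenvalues should force the pairing to land in the $\Gal(F_{\fp^2}/F_\fp)$-fixed part of $F_{\fp^2}^\times$, along the lines of the arguments in \cite{Gr} and \cite{GMS2} for the elliptic curve case. The second, and main, sub-claim is the isogeny statement. By Theorem~\ref{th: kadziela} it suffices to produce integer matrices $B,C\in\M_d(\Z)$ with $V^B={}^C W$; linearizing via $\mathrm{ord}_\fp$ and a branch of $\log_\fp$ through \eqref{eq: logs} reduces this to two matrix identities over $F_\fp$. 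Both lattices carry compatible actions of $K_f$---on $\Lambda_f$ from the embedding $K_f\hookrightarrow\End(A_f)\otimes\Q$, on $\Lambda_f'$ from the Hecke action on $H^{n+s}(\Gamma_0(\fp\fm),\Z)^f$---so, as observed in the introduction, the claim can be reformulated as the equality of two $\mathcal{L}$-invariants naturally valued in $K_f\otimes_\Q F_\fp$.

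When $F$ is totally real, I would proceed by transferring $f$ via Jacquet--Langlands to a newform $f^{B'}$ on an indefinite quaternion algebra $B'/F$ that is split at exactly one real place and ramified at $\fp$. The associated Shimura curve $X_{B'}$ admits a $\fp$-adic uniformization by the \cerednikdrinfeld\ theorem, and $A_f$ arises as the quotient of $\Jac(X_{B'})$ cut out by $I_f$, so its Tate lattice $\Lambda_f$ has an explicit description in terms of multiplicative $\fp$-adic integrals against homology classes on the Bruhat--Tits tree of $\PGL_2(F_\fp)$. A Hochschild--Serre / Shapiro-lemma comparison between the cohomology of the $S$-arithmetic group $\Gamma$ and the cohomology of the level-$\fm$ arithmetic subgroup of $B'$ should then identify these integrals with the pairing \eqref{eq: pairing 2}. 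This is the strategy of Dasgupta and Longo--Rotger--Vigni for $F=\Q$, and it should extend to arbitrary totally real $F$ modulo careful work on the cohomological comparison.

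The main obstacle, and the reason the conjecture is genuinely open as stated, is the case in which $F$ has at least one complex place. Here no Shimura variety carries $f$, no construction of $A_f$ is known, and so there is no geometric object against which to compare $\Lambda_f'$: the conjecture is in fact simultaneously asserting the existence of $A_f$ with prescribed $\fp$-adic periods. Any proof in this regime will have to either produce $A_f$ directly from the automorphic data---which would itself be a deep contribution, likely requiring $p$-adic families or substantial input from the Langlands program---or bypass $A_f$ entirely by matching automorphic and motivic $\mathcal{L}$-invariants in an abstract framework. Absent such input, the best one can hope for is the kind of numerical verification carried out in Section~\ref{section 4}.
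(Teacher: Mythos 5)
The statement you were asked about is a \emph{conjecture}: the paper does not prove it, and offers only (i) citations to known special cases and (ii) the numerical evidence of Section~\ref{section 4}. So there is no proof in the paper to compare yours against, and your proposal --- which is a strategy survey rather than a proof --- is the honest response. Your assessment matches the paper's own: the known instances listed after the conjecture are all for $d=1$ (Darmon for $F=\Q$ and $B=\M_2(\Q)$, Greenberg--Dasgupta and Longo--Rotger--Vigni for division algebras over $\Q$, Seveso in higher weight, Spiess for some totally real $F$), the $d>1$ case over $F=\Q$ is handled by Dasgupta and Longo--Rotger--Vigni exactly by the \cerednikdrinfeld\ route you describe, and for $F$ with a complex place the conjecture is wide open precisely because, as you say, it implicitly asserts the existence of $A_f$ together with a prescription of its $\fp$-adic periods.

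Two small calibration points. First, even in the totally real case your Jacquet--Langlands step is not automatic: one needs a quaternion algebra $B'$ ramified at $\fp$, at the primes dividing $\fd$, and at all but one archimedean place, and the existence of such an algebra imposes a parity condition on the number of ramified places; this is why even over totally real fields only ``some cases'' are proven (cf.\ the reference to \cite{spiess} after the conjecture). Second, the integrality sub-claim (that the periods land in $F_\fp^\times$ rather than $F_{\fp^2}^\times$) is not known to follow from the eigenvalue conditions by a soft Galois-descent argument even for $d=1$; in the known cases it is extracted from the same \cerednikdrinfeld\ comparison that proves the isogeny, not proved separately beforehand. With those caveats, your proposal is a fair account of the state of the art, and it correctly identifies that anything beyond the numerical verification of Section~\ref{section 4} would require genuinely new input.
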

Some instances of this conjecture are known in the case $d=1$. If $F=\Q$ and $B=\M_2(\Q)$ this is a theorem of Darmon \cite{darmon-integration}. For $B$ a quaternion division algebra over $\Q$ it was proven independently in \cite{greenberg-dasgupta} and \cite{LRV}. For higher weight the result is due to Seveso \cite{seveso}, and for totally real $F$ some cases are proven in \cite[Proposition 5.9]{spiess}. In the next section we provide some numerical evidence for the conjecture in the case $d=2$ and $F$ a cubic field of signature $(1,1)$.

\begin{remark}
Using the same arguments as in \cite[\S 11]{Gr}, which appear also in more detail in \cite{rotger-seveso} and \cite{greenberg-seveso}, one can see that the pairing
\begin{align}\label{eq: pairing ord}
 \ord_\fp\circ \langle \cdot , \cdot \rangle_f : H^{n+s}(\Gamma_0(\fp\fm),\Z)^f\times H_{n+s}(\Gamma_0(\fp\fm),\Z)^f \lra \Q
\end{align}
is non-degenerate, which implies that $\Lambda_f'$ is a lattice. The non-degeneracy of \eqref{eq: pairing ord} is a consequence, on the one hand, of the naturality of the several (co)homological maps involved in the definition of the pairing and, crucially, of the combinatorial description of $ \ord_\fp\circ \langle \cdot , \cdot \rangle_f$ as stated, for instance, in \cite[Lemma 28]{Gr}. If we denote by $\eta^*\colon H^{n+s}(\Gamma_0(\fp\fn),\Q)^f \simeq H^{n+s+1}(\Gamma,\Q)^f$ the map arising from $\eta$ via the universal coefficients theorem, then the penultimate display in \cite[p. 573]{Gr} shows that $\ord_\fp \langle \eta^*(-),\eta(-)\rangle_f$ coincides with the natural pairing 
\begin{align*}
  H^{n+s+1}(\Gamma,\Z)^f\times H_{n+s+1}(\Gamma,\Z)^f \lra \Q.
\end{align*}

\end{remark}

\section{The case of abelian surfaces: calculations and numerical evidence}\label{section 4}
In this \S{} we present some computational evidence for Conjecture~\ref{conj: main} in the case where $A_f$ is of dimension $2$. We report on the numerical calculation of the lattice $\Lambda_f'$ for a concrete modular form $f$, which turns out to coincide (up to our working precision of $50$ $p$-adic digits) with a lattice which is isogenous to the lattice of $A_f$. 

In \S\ref{Algorithms for computing the $p$-adic lattice} we briefly describe the algorithms that we have used to compute the period lattice $\Lambda_f'$, which are actually an adaptation of the ones described in \cite{GMS2} for the case of elliptic curves. In \S\ref{subsection: p-adic periods of genus two curves} we review Teitelbaum's explicit formulas to compute the $p$-adic lattice of the Jacobian of a genus two curve of \cite{Tei}, which we use to compute the lattice of $A_f$. In \S\ref{subsection: a numerical verification} we present the results of our numerical calculations of $\Lambda_f'$ for a modular form $f$ over a number field $F$ of signature $(1,1)$. In this case,  $A_f$ turns out to be isogenous to the Jacobian of a genus two curve $C_f$ for which we know an explicit equation. Using Teitelbaum's formulas we can compute its period lattice $\Lambda_f$, and check that it is isogenous (up to high precision) to the lattice $\Lambda_f'$.

\subsection{Algorithms for the computation of the $p$-adic lattice}\label{Algorithms for computing the $p$-adic lattice} The code that we use to compute the pairing \eqref{eq: pairing 2} in dimension $2$ can be found at \texttt{https://github.com/mmasdeu/darmonpoints}. The algorithms are mainly an adaptation of the ones presented in \cite{GMS2} for the one dimensional case, and thus we give a brief overview of then,  emphasizing the points which require to be modified in higher dimension. A complete discussion of the details that we omit can be found in \cite[\S 3]{GMS2}.

Recall our running notation: $F$ is a number field of narrow class number one and $\fN\subset \cO_K$ is an ideal that decomposes into coprime ideals $\fN=\fp\fm\fd$, where $\fp$ is prime and $\fd$ is squarefree. Also $B/F$ is a quaternion algebra of discriminant $\fd$ and $\Gamma_0(\fp\fm)\subset\Gamma_0(\fm)$ are the norm $1$ units of Eichler orders of level $\fp\fm$ and $\fm$, respectively. We denote by $n$ the number of split real places of $B$ and by $s$ the number of complex places of $F$. Our goal is, first of all, to decide whether there exists a newform $f\in H^{n+s}(\Gamma_0(\fp\fm),\C)$ with $[K_f:\Q]=2$; in case it does, we aim to compute it and to compute the integration pairing \eqref{eq: pairing 2} and the lattice $\Lambda_f'$.  The algorithms that we next describe only work under the assumption that $n+s = 1$. In other words, if $B$ splits at a single infinite place. From now on we assume this:
\begin{ass}\label{ass}
  $n+s = 1$.
\end{ass}
There are two reasons for this restriction. The first one is that the (co)homology groups are then in degree $1$, and higher degrees are more difficult to deal with algorithmically; the second is that under Assumption \ref{ass} we can use the algorithms of Voight \cite{voight} (if $n=1$) and Page \cite{page} (if $s=1$) that compute generators and relations of $\Gamma_0(\fp\fm)$ and provide an effective solution of the word problem. 

If $G$ is a group and $A$ a $G$-module, a convenient  way of thinking of the homology groups is in terms of the bar resolution, in which the group of $i$-th chains are $$C_i(G,A)=\Z[G]\otimes_\Z \stackrel{(i)}{\cdots} \otimes_\Z \Z[G] \otimes_\Z A.$$
In order to compute $H_1(\Gamma_0(\fp\fm),\Z)$, it is also useful to use the canonical isomorphism with the abelianization
\begin{align*}
  H_1(\Gamma_0(\fp\fm),\Z)\simeq \Gamma_0(\fp\fm)^{\text{ab}}.
\end{align*}
From the generators and relations for $\Gamma_0(\fp\fm)$ it is straightforward to obtain generators and relations for the $\Gamma_0(\fp\fm)^\text{ab}$.

Then computing the Hecke operator at a prime $\fl\nmid \fN$ essentially boils down to finding an element $\pi_\fl\in R_0(\fp\fm)$ whose reduced norm generates $\fl$ and is positive at the real places. This can be done with the routines for quaternion algebras implemented by Voight in~\cite{voight2005quadratic} and available in Magma \cite{magma}. Once $\pi_\fl$ is found one can find a decomposition of the double coset $  \Gamma_0(\fp\fm)\pi_\fl \Gamma_0(\fp\fm)$ of the form
\begin{align*}
  \Gamma_0(\fp\fm)\pi_\fl \Gamma_0(\fp\fm)= \bigsqcup_{i=0}^{|\fl|}g_i\Gamma_0(\fp\fm).
\end{align*}
If $c=\sum_g n_g \cdot g\in \Z[G]$ is a cycle, the Hecke operator acting on the homology class $[c]$ is given by the explicit formula
\begin{align*}
  T_\fl([c])= \sum_{i=0}^{|\fl|}\sum_g n_g\cdot t_i(g),
\end{align*}
where $t_i(g)$ is defined by the equality $g^{-1}g_i = g_j t_i(g)^{-1}$ for some (unique) $j$. Similar explicit formulas exist for the operators $T_\fl$ for $\fl\mid \fp\fm$ and for the Hecke operators at the infinite places $T_v$. Now, if ${c_1,\dots,c_t}$ is basis of $H_1(\Gamma_0(\fp\fm),\Z)$ one can compute $T_\fl(c_i)$ using the above formula, and express it in terms of the basis by using the explicit solution to the word problem provided by the algorithms of Voight and Page. We are not interested in torsion homology classes, so if (say) $c_1,\dots,c_m$ are the free generators we obtain an $m\times m$ matrix of $T_\fl$ acting on the torsion free part of $H_1(\Gamma_0(\fp\fm),\Z)$; equivalently, we can also think that this is the matrix of $T_\fl$ acting on $H_1(\Gamma_0(\fp\fm),\Q)$.

We have implemented the algorithms under the assumption that the ideal $\fm$ is trivial. This is not a restriction of the method and it could be dispensed with, but we restricted to this case in order to simplify  some of the steps in the calculation. So let us suppose from now on that $\fm = (1)$.

The first step is to compute the $\fp$-new part of $H_1(\Gamma_0(\fp),\Q)$. Let $\omega_\fp\in R_0(\fp)^\times$ be an element whose reduced norm is positive at the real places of $F$ and that normalizes $\Gamma_0(\fp)$. Then, if we let $\widehat \Gamma_0(1)=\omega_\fp^{-1}\Gamma_0(1)\omega_\fp$ we can identify the group $\Gamma$ introduced in page~\pageref{def-of-gamma} with the amalgamated product $\Gamma_0(1)\star_{\Gamma_0(\fp)} \widehat \Gamma_0(1)$ (all the groups viewed as subgroups of $\Gamma$). The inclusions $ \Gamma_0(\fp)\subset \Gamma_0(1)$ and $\Gamma_0(\fp)\subset \widehat \Gamma_0(1)$ induce maps
\begin{align*}
  \alpha\colon H_1(\Gamma_0(\fp),\Q)\lra H_1(\Gamma_0(1),\Z),\ \   \hat\alpha\colon H_1(\Gamma_0(\fp),\Z)\lra H_1(\widehat\Gamma_0(1),\Z).
\end{align*}
Then the $\fp$-new part is $H_1(\Gamma_0(\fp),\Q)_{\fp-\text{new}}=\ker(\alpha)\cap \ker (\hat \alpha)$, which can be easily computed from the generators and relations of $\Gamma_0(\fp)^{\text{ab}}$ and $\Gamma_0(1)^{\text{ab}}$. We compute next $H_1(\Gamma_0(\fp),\Q)_{\fp-\text{new}}^+$, the subspace on which $T_v$ acts as $+1$ where $v$ is the real place of $F$ that splits in $B$ (this is only needed if $n=1$). Now we compute the matrix of $T_{\fl_1}$ acting on  $H_1(\Gamma_0(\fp),\Q)_{\fp-\text{new}}^+$ for some small prime $\fl_1\nmid \fN$. This module decomposes into a direct sum of submodules, given by the factorization of the minimal polynomial of $T_{\fl_1}$ into coprime factors. If some factor is irreducible, then the corresponding submodule is an irreducible $\mathbb{T}$-module. To each non-irreducible submodule, we apply the same procedure for the Hecke operator $T_{\fl_2}$ for some other prime $\fl_2$, and so on. In this way, after applying a finite number of Hecke operators $T_{\fl_1},T_{\fl_2},T_{\fl_3},\dots$ we will have decomposed $H_1(\Gamma_0(\fp),\Q)_{\fp-\text{new}}^+$ into a direct sum of irreducible $\mathbb{T}$-modules. Then the submodules of rank $2$ correspond to newforms $f$ with $[K_f:\Q]=2$, and the submodule is what we denoted $H_1(\Gamma_0(\fp),\Q)^f$. At this point we have explicitly computed a basis $\tilde\gamma_1,\tilde\gamma_2$ of $H_1(\Gamma_0(\fp),\Q)^f$. Since we are interested in integral classes, we set $\gamma_1=\tilde\gamma_1^a$ and $\gamma_2=\tilde \gamma_2^b$, for $a,b\in\Z$ such that $\gamma_1,\gamma_2\in H_1(\Gamma_0(\fp),\Z)^f$. 

The next step is for each $\gamma\in\{ \gamma_1,\gamma_2\}$ to compute $\delta\circ d^{-1}(\gamma)\in H_1(\Gamma,\Div^0 \cH_\fp)$. By construction $\gamma$ lies in $ \ker(\alpha)\cap\ker(\hat\alpha)$. Therefore, $\gamma$ is trivial when viewed as an element in both $\Gamma_0(1)^{\text{ab}}$ and $\widehat\Gamma_0(1)^{\text{ab}}$, so that it is a product of commutators in $\Gamma_0(1)$ and in $\widehat \Gamma_0(1)$.  Using the effective solution to the word problem in $\Gamma_0(1)$ provided by the algorithms of Voight and Page, one can compute this decomposition explicitly. Now using the explicit formula of \cite[Lemma 3.2]{GMS2} one computes elements $z\in \Z[\Gamma_0(1)]\otimes \Z[\Gamma_0(1)]$ and $\hat z\in \Z[\widehat\Gamma_0(1)]\otimes \Z[\widehat\Gamma_0(1)]$ such that $\partial z = \gamma$ and $\partial \hat z = \gamma$. Both $z$ and $\hat z$ can be naturally viewed as elements of $\Z[\Gamma]\otimes \Z[\Gamma]$; then $c=z-\hat z \in \Z[\Gamma]\otimes \Z[\Gamma]$ is a cycle and its class $[c]\in H_2(\Gamma,\Z)$ maps to $\gamma$ under the map $d$. Now let $\tau$ be any element in $\Div^0\cH_\fp$. If we write $c=\sum n_i g_i\otimes h_i$, then by \cite[Lemma 3.3]{GMS} the cocycle $\delta\circ d^{-1}$ is represented by the cycle
\begin{align*}
  \sum n_i h_i\otimes (g_i^{-1}\tau-\tau).
\end{align*}

The computations on cohomology groups are practically the same as for the one-dimensional situation. More precisely, we identify the cohomology group $H^1(\Gamma_0(\fp),\Q)$ as the dual of $H_1(\Gamma_0(\fp),\Q)$. In particular, the basis $\{\gamma_1,\gamma_2\}$ gives rise to a dual basis $\{c_1,c_2\}$, whose elements actually lie in $H^1(\Gamma_0(\fp),\Z)^f$. These correspond, under the isomorphism of Proposition \ref{prop: alpha} to cohomology classes 
\begin{align*}
\omega_1,\omega_2\in H^1(\Gamma,\Meas_0(\P^1(F_\fp),\Z))
\end{align*} 
by means of an explicit formula. The main point is that if $B$ is a ball in $\P^1(F_\fp)$, then either $B$ or $\P^1(F_\fp)\setminus B$ is of the form $g\cO_{F_{\fp}}$ for some $g\in \Gamma/\Gamma_0(\fp)$. Thus a measure in $\Meas_0(\P^1(F_\fp),\Z)$ can be regarded as an element in $\mathrm{Coind}_{\Gamma_0(\fp)}^\Gamma \Z$, satisfying certain additional properties. Now the Shapiro isomorphism gives
\begin{align*}
  H_1(\Gamma_0(\fp),\Z)\simeq H^1(\Gamma,\mathrm{Coind}_{\Gamma_0(\fp)}^\Gamma\Z).
\end{align*}
This isomorphism can be given at the level of cocycles by an explicit formula, as follows. Let $\{\gamma_e\}$ be a system of representatives of $\Gamma/\Gamma_0(\fp)$; for $h\in \Gamma$ denote by $\gamma_{e(h)}$ the representative such that $h\in \gamma_{e(h)}\cdot \Gamma_0(\fp)$. Then, for a cocycle $c\colon \Gamma_0(\fp)\ra\Z$ its image under the above isomorphism is represented by the cocycle $g\mapsto \omega_g$, where $\omega_g (h)=c(r)$ for the unique $r\in \Gamma_0(\fp)$ such that 
\begin{align*}
  \gamma_{e(h)}g=r\gamma_{e'},\ \ \text{ for some representative } \gamma_{e'}. 
\end{align*}
The element $\omega_g$ in principle belongs to $\mathrm{Coind}_{\Gamma_0(\fp)}^\Gamma\Z$. With an appropriate choice of the representatives $\{\gamma_e\}$ introduced in \cite{LRV} (see also \cite[\S 3.3]{GMS2}), one sees that $\omega_g$ in fact belongs to $\Meas_0(\P^1(F_\fp),\Z)$ for all $g\in\Gamma$. In this way we compute the cocycles $\omega_1$ and $\omega_2$.

The last step is to compute the integration pairing. That is to say, for $\gamma\in\{\gamma_1,\gamma_2\}$ and $\omega\in\{\omega_1,\omega_2\}$ we need to compute (an approximation) of $\Xint\times_{\delta\circ d^{-1}\gamma} \rho^{-1}(\omega)$. This presents no difference with the case of dimension $1$, and we use exactly the same algorithm presented in \cite[\S 3.4]{GMS2}.

\subsection{$p$-adic periods of genus two Mumford curves}\label{subsection: p-adic periods of genus two curves}  In \S\ref{subsection: a numerical verification} we will compute the period lattice attached to a modular form $f$. In order to numerically test Conjecture \ref{conj: main}, we need a method to compute the period lattice of the abelian surface $A_f$. In our examples, $A_f$ can be taken to be the Jacobian of a genus two curve $C_f$.  Then we can use Teitelbaum's formulas \cite{Tei}, which provide formulas for the $p$-adic periods of a genus two Mumford curve in terms of the coefficients of an equation of the curve. The strategy of \cite{Tei} is to first express the coefficients of an equation as power series in the $p$-adic periods of the curve, and then to invert these series. In a sense, this is similar to the case of Tate elliptic curves, whose $j$-invariant in terms of the Tate period is given by 
\begin{align*}
  j = \frac{1}{q} + 744 + 196884q + \cdots,
\end{align*}
and the expression for the Tate period in terms of the $j$-invariant in obtained by inverting the series. The formulas in the genus two case are rather more complicated, but since they are a key tool in our computations we give an account on how they are obtained (and we also take the chance to correct some minor typos in the formulas of \cite{Tei}). 

Let $F$ be a complete local field. Denote by $\cO_F$ its ring of integers, $\pi$ a uniformizer, and $k$ the residue field, which we assume to be of characteristic $\neq 2$. Let $X$ be a genus two Mumford curve over $F$; that is, $X$ is smooth, irreducible, projective, and it has a stable model over $\cO_F$ such that all components of its generic fiber are isomorphic to $\P^1_k$ and all double points are $k$-rational. Then Mumford's uniformization theory guarantees the existence a free subgroup of rank two $\Gamma_X\subset \PGL_2(F)$ such that $X^{an}\simeq \Omega/\Gamma_X$ (here $\Omega\subset \P^1_F$ is the set of non-limit points of $\Gamma_X$ and $X^{an}$ denotes the rigid analytic space attached to $X$). Manin and Drinfeld \cite{Manin-Drinfeld} constructed a symmetric pairing 
\begin{align}\label{eq: manin pairing}
  \langle \cdot ,\cdot \rangle \colon \Gamma_X \times \Gamma_X  \lra F^\times
\end{align}
that uniformizes the Jacobian of $X$. That is, \eqref{eq: manin pairing} embeds $\Gamma_X$ as a discrete subgroup of $\Hom(\Gamma_X,F^\times)$ and, if we call $\Lambda$ the image of $\Gamma_X$, then the quotient  $\Hom(\Gamma_X,F^\times)/\Lambda$ is isomorphic to $\Jac(X)$. 

Let us assume that the Weierstrass points of $X$ are defined over $F$. The reduction of $X$ determines a partition of the set of Weierstrass points into three subsets $S_1, S_2,S_3$ of two elements, which can be read off from the distribution of the Weierstrass points in the special fiber of a minimal regular model for $X$ \cite[Proposition 9]{Tei}. To be more precise, the set of Weierstrass points reduce to either three points $w_1$, $w_2$, $w_3$ of multiplicities $2$, $2$, $2$, or to four points $w_1$, $w_2$, $w_3$, $w_3'$ with multiplicities $2$, $2$, $1$, $1$. For $i=1,2$ the set $S_i$ consists of the points that reduce to $w_i$. In the first case, $S_3$ consists on the points that reduce to $w_3$, whereas in the second case it consists on those points that reduce to $w_3$ or $w_3'$. Observe that the ordering of the $S_i$'s is not uniquely determined, although in the second case the set $S_3$ is distinguished.

For each $i$ we choose a labeling of the points in $S_i$ as $S_i = \{ P_i^+,P_i^-\}$. Let $\gamma_1,\gamma_2,\gamma_3\in \Gamma_X$ be the elements associated to the choice  of this labeling as in \cite[\S 2.1]{Tei}. For our purposes, we do not need to know much about these elements, just that they generate $\Gamma_X$ and that $\gamma_1\gamma_2\gamma_3 = 1$. Teitelbaum defines the fundamental periods of $X$ as
\begin{align*}
  q_1 = \langle \gamma_2,\gamma_3\rangle^{-1},\   q_2 = \langle \gamma_1,\gamma_3\rangle^{-1},\   q_3 = \langle \gamma_1,\gamma_2\rangle^{-1}.
\end{align*}
Observe that a period lattice of $\Jac(X)$ is given by the columns of the matrix 
\begin{align}
\label{eq:teitelbaum-periods}
  \mtx A B B D := \mtx{\langle \gamma_1, \gamma_1 \rangle}{\langle \gamma_1, \gamma_2 \rangle}{\langle \gamma_2, \gamma_1 \rangle}{\langle \gamma_2, \gamma_2 \rangle},
\end{align}
so indeed the lattice $\Lambda$ can be recovered from the $q_i$'s. For later use, we note that the periods can also be recovered from the lattice:
\begin{align}\label{eq: qs in terms of ABC}
q_1 = BD, \quad q_2 = AB,\quad q_3 = B^{-1}.
\end{align}

 A related notion, which plays a key role in Teitelbaum's formulas, are the so-called half-periods of $X$, defined as
\begin{align*}
  p_1 = \chi_{12}(\gamma_2), \  p_2 = \chi_{23}(\gamma_3),\  p_3 = \chi_{31}(\gamma_1),
\end{align*}
where $\chi_{12}$, $\chi_{23}$, and $\chi_{31}$ are the elements in $\Hom(\Gamma_X,F^\times)$ defined in \cite[Definition 17]{Tei}. The $\chi_{ij}$'s are defined by very explicit expressions, but we do not need them actually. Indeed, one of the main results of \cite{Tei} is an expression of the coefficients of an equation of $X$ as power series in the $p_i$'s; by inverting these series one obtains formulas for the $p_i$'s in terms of an equation of $X$. These are the formulas that we are interested in, since the $p_i$'s are related to the $q_i$'s by the simple relation $q_i = p_i^{-2}$ (which also justifies the name of half periods).

Before stating the formulas, we still need to introduce some more notation. Let 
\begin{align}
(\cdot,\cdot)\colon \Gamma_X^{ab}\times \Gamma_X^{ab}\lra F^\times
\end{align} 
be the bilinear pairing defined by $(\gamma_i,\gamma_j)=p_k^{-1}$ for different $i,j,k$ (observe that this determines the pairing completely because of the relation $\gamma_1\gamma_2\gamma_3=1$), and let $\theta\colon \Hom(\Gamma_X,F^\times)\lra F^\times$ be the theta function defined by the formula
\begin{align*}
  \theta (\chi)= \sum_{\gamma\in\Gamma_X^{ab}} (\gamma,\gamma)\chi(\gamma).
\end{align*}
 Table 1  of \cite{Tei}  defines 15 characters, denoted as $\chi_{P,Q}$ for various choices of $$P,Q\in \{P_1^+, P_1^- ,P_2^+, P_2^-,P_3^+, P_3^-\},$$ by means of explicit formulas in terms of the half periods. These characters, together with the trivial character $\chi_1$, are the set of $2$-torsion points of the Jacobian of $X$ (under the identification $\Jac(X)\simeq \Hom(\Gamma_X,F^\times)/\Lambda$).

Let $x$ be the function on $X$ that has a double pole at $P_1^+$, a double zero at $P_2^+$, and such that $x(P_3^+)=1$. Then, possibly after  adding the square root of some element of $F$, we can assume that $X$ has a model given by
\begin{align*}
  y^2 = x (x-1)(x-x(P_1^-))(x-x(P_2^-))(x-x(P_3^-)).
\end{align*}
Then by \cite[Theorem 28]{Tei} we have that
\begin{align}
\label{eq:xsandps}  x(P_3^-)&= \frac{\theta^2(\chi_{P_2^+,P_2^-})\theta^2(\chi_{P_1^-,P_2^+})}{\theta^2(\chi_{P_1^+,P_1^-})\theta^2(\chi_{P_1^+,P_2^-})},\\\nonumber
 \frac{x(P_1^-)- 1}{x(P_1^-)}&= \frac{\theta^2(\chi_{P_3^+,P_3^-})\theta^2(\chi_{P_2^-,P_3^+})}{\theta^2(\chi_{P_2^+,P_2^-})\theta^2(\chi_{P_2^+,P_3^-})},\\\nonumber
 \frac{1}{1-x(P_2^-)}&= \frac{\theta^2(\chi_{P_1^+,P_1^-})\theta^2(\chi_{P_3^-,P_1^+})}{\theta^2(\chi_{P_3^+,P_3^-})\theta^2(\chi_{P_3^+,P_1^-})}.
\end{align}
Note that there is a typo in display $(23)$ of \cite{Tei}, in which the left hand sides of the last two equations are swapped. These formulas express $x(P_1^-)$, $x(P_2^-)$, and $x(P_3^-)$ in terms of the half periods $p_1$, $p_2$, and $p_3$, which can be calculated using the definitions of the characters $\chi_{P,Q}$ and of $\theta$; explicitly:
\begin{align*}
  \theta(\chi_{P_1^+,P_1^-}) &= \sum_{i,j\in\Z} p_2^{i^2}p_1^{j^2}p_3^{(i-j)^2}(-1)^j,&
\theta(\chi_{P_2^+,P_2^-}) &= \sum_{i,j\in\Z} p_2^{i^2}p_1^{j^2}p_3^{(i-j)^2}(-1)^i,\\
\theta(\chi_{P_3^+,P_3^-}) &= \sum_{i,j\in\Z} p_2^{i^2}p_1^{j^2}p_3^{(i-j)^2}(-1)^{i+j},&
\theta(\chi_{P_1^-,P_2^+}) &= \sum_{i,j\in\Z} p_2^{i^2-i}p_1^{j^2-j}p_3^{(i-j)^2}(-1)^{i+j},\\
\theta(\chi_{P_1^+,P_2^-}) &= \sum_{i,j\in\Z} p_2^{i^2-i}p_1^{j^2-j}p_3^{(i-j)^2},&
\theta(\chi_{P_2^-,P_3^+}) &= \sum_{i,j\in\Z} p_2^{i^2+i}p_1^{j^2}p_3^{(i-j)^2+(i-j)}(-1)^j,\\
\theta(\chi_{P_2^+,P_3^-}) &= \sum_{i,j\in\Z} p_2^{i^2+i}p_1^{j^2}p_3^{(i-j)^2+(i-j)},&
\theta(\chi_{P_3^-,P_1^+}) &= \sum_{i,j\in\Z} p_2^{i^2}p_1^{j^2+j}p_3^{(i-j)^2-(i-j)}(-1)^i,\\
\theta(\chi_{P_3^+,P_1^-}) &= \sum_{i,j\in\Z} p_2^{i^2}p_1^{j^2+j}p_3^{(i-j)^2-(i-j)}.\\
\end{align*}

Note that there is a typo in display $(25)$ of \cite{Tei} in the sign affecting the sum. In this way, these expressions give the coordinates $x(P_1^-)$, $x(P_2^-)$, and $x(P_3^-)$ as power series in the half periods $p_1$, $p_2$, and $p_3$. 

If one starts with a genus two Mumford curve $X/F$, one can compute its half periods $p_i$, and therefore its periods $q_i=p_i^{-2}$, as follows. Consider a hyperelliptic model $y^2 = f(x)$, with $f(x)$ of degree $6$. After possibly changing $F$ we can assume that $f$ has its six roots in $F$, and hence the Weierstrass points are defined over $F$; the next step is to label them as $P_1^+, P_1^-,P_2^+,P_2^-,P_3^+,P_3^-$. The reduction $\tilde f$ of $f$ $\pmod \pi$ factors either as 
\begin{align*}
  \tilde f (x) = (x-\tilde x_1)^2 (x-\tilde x_2)^2 (x-\tilde x_3)^2,
\end{align*}
or as
\begin{align*}
  \tilde f (x) = (x-\tilde x_1)^2 (x-\tilde x_2)^2 (x-\tilde x_3)(x-\tilde x_3').
\end{align*}
In the first case we take $x(P_i^{\pm})$ as the roots reducing to $\tilde x_i$ (in any order). In the second, $x(P_1^{\pm})$ (resp. $x(P_2^{\pm})$) reduce to $\tilde x_1$ (resp. $\tilde x_2$), and $x(P_3^\pm)$ are the points reducing to $\tilde x_3$ and $\tilde x_3'$ (again, in any order). Next, apply the transformation
\begin{align*}
x\mapsto  \frac{(x-x(P_2^+))(x(P_3^+)-x(P_1^+))}{(x-x(P_1^+))(x(P_3^+)-x(P_2^+))},
\end{align*}
which sends $P_1^+$ to $\infty$, $P_2^+$ to $0$, and $P_3^+$ to $1$; denote again by $x(P_1^-)$, $x(P_2^-)$, and $x(P_3^-)$ the $x$-coordinates of the $P_i^-$ after applying the transformation. Then the half periods are obtained by plugging these values in \eqref{eq:xsandps}, and solving for $p_1$, $p_2$, and $p_3$. The power series can be inverted, as is done in \cite[p. 141]{Tei}. However, computationally it is more efficient to solve the system numerically, by applying a three-dimensional Newton scheme.

\subsection{A numerical verification}\label{subsection: a numerical verification} Consider the cubic field $F=\Q(\alpha)$, where $\alpha$ is a root of the polynomial $x^{3} - x^{2} + 3 x - 2 $. This field has signature $(1,1)$. Let $\fp = (\alpha^2+1)$, which has norm $p=5$, and let $\fd = (-2\alpha^2 + 4\alpha - 7)$, of norm $173$. We see that in this example $F_\fp=\Q_p=\Q_5$.

Let $B$ be the (unique) quaternion algebra over $F$ of discriminant $\fd$. It is generated by $i$ and $j$ satisfying $i^2=-4\alpha^2+4\alpha-7$ and $j^2=-173$. We have computed the cohomology group $H^1(\Gamma_0(\fp),\C)$ associated to norm-one units of an Eichler order of level $\fp$ in $B$, and found it to be of dimension $4$. The Hecke operator $T_\fl$, where $\fl=(\alpha)$, acts on it with characteristic polynomial $(x^2+x-4)(x^2+x-1)$. Therefore there are two irreducible $2$-dimensional components. We consider the component corresponding to the factor $x^2+x-1$, which corresponds to a newform $f$ whose eigenvalues generate the quadratic field $\Q(\sqrt{5})$.

Using the methods described in \S\ref{Algorithms for computing the $p$-adic lattice} (the actual code for the implementation can be found in \texttt{https://github.com/mmasdeu/darmonpoints}) we find a basis $\{\varphi_1,\varphi_2\}$ of $H^1(\Gamma_0(\fp),\Z)^f$. Similarly, we find a basis $\{\theta_1',\theta_2'\}$ of $H_1(\Gamma_0(\fp),\Z)^f$. If we identify $H_1(\Gamma_0(\fp),\Q)^f$  with the dual of $H^1(\Gamma_0(\fp),\Q)^f$, these two bases are not dual to each other: the matrix of cap-product pairings is
\[
\left(\begin{array}{rr}
-142456600326 & -18 \\
94971066876 & 12
\end{array}\right),
\]
which has determinant $-144$. This leads us to consider a new basis for the homology which is a ``pseudo''-dual basis for $\{\varphi_1,\varphi_2\}$ and which has the property that the resulting cap-product is
\[
\left(\begin{array}{rr}
-144 & 0 \\
0 & -144
\end{array}\right).
\]
On this new basis $\{\theta_1,\theta_2\}$, the Hecke operator $T_{\fl}$ acts as
\[
\mat{T_{\fl}\theta_1\\T_{\fl}\theta_2}=\mtx{-2}{1}{2}{1}^t \mat{\theta_1\\\theta_2}.
\]
Computing the integration pairing \eqref{eq: pairing 2} we obtain the periods:
\begin{align*}
A_0 &= \langle \varphi_1,\theta_1 \rangle = 5^{45}\cdot 227015308497264163898130173143 \pmod{5^{87}},\\
B_0 &= \langle \varphi_1,\theta_2 \rangle =  30930079006020210124765717907 \pmod{5^{42}}.\\
\end{align*}
The periods $C_0=\langle \varphi_2,\theta_1\rangle$ and $D_0=\langle \varphi_2,\theta_2\rangle$ are readily obtained from $A_0$, $B_0$ and the matrix $T_{\fl}$, by Hecke-equivariance of the pairing. In this particular case, for example:
\begin{align*}
C_0 &= B_0^2 & D_0 &= A_0 B_0^{3}.
\end{align*}
The lattice $\Lambda_f'$ is therefore generated by the columns of the matrix $\smtx{A_0}{B_0}{C_0}{D_0}$.

Now, we want to verify that $\Lambda_f'$ is isogenous to the period lattice of $A_f$. For this, consider the genus two curve $X/F$ given by the following equation:
\begin{align*}
 y^2 + &(x^3 + (-\alpha - 1)x^2 - \alpha x + 1)y =\\
&(2\alpha^2 - 4\alpha + 2)x^4 + (4\alpha^2 - 8\alpha + 3)x^3 + (5\alpha^2 - 7\alpha + 3)x^2 + (3\alpha^2 - 3\alpha + 1)x + \alpha^2 - \alpha.
\end{align*}
We have obtained this curve by specializing the parameters in the Brumer family (see \cite{hashimoto}), and therefore the endomorphism algebra of $\Jac(X)$ contains $\Q(\sqrt{5})$. The conductor of $\Jac(X)$ can be computed using the Magma: it is precisely $\fp^2\fd^2$. Moreover, we have computed the eigenvalue $a_\fl(f)$ and we have checked that the relation
\[
\# A_f(\cO_F/\fl)= N_{\Q(\sqrt{5})/\Q}(1+|\fl| - a_\fl(f))
\]
holds. These properties lead us to think that, in all likelihood, $\Jac(X)$ is isogenous to $A_f$. Since we have the equation of $X$, we have been able to use Teitelbaum's formulas recalled in Section \ref{subsection: p-adic periods of genus two curves} to compute an approximation (up to $50$ digits) to the $p$-adic periods $A$, $B$, $D$ of $\Jac(X)$ as in \eqref{eq:teitelbaum-periods}. This is precisely the lattice $\Lambda_f$.

Now, in order to check that $\Lambda_f$ and $\Lambda_f'$ are isogenous, as predicted by Conjecture \ref{conj: main}, we need to find matrices $Y,Z\in \M_2(\Z)$ such that $V^Y = {}^Z W$, where $V=\smtx A B B D$ and $W = \smtx{A_0}{B_0}{C_0}{D_0}$. What we do, in fact, is to find matrices satisfying the weaker relation (see \eqref{eq: logs} for the notation):
\begin{align}
\label{eq:kadziela-relation}
Y \ell(V) = \ell(W) Z,
\end{align}
where $\ell$ denotes a $\fp$-adic logarithm. To this purpose, lattice reduction techniques allow us to express the logarithms of the Teitelbaum periods $A$, $B$ and $D$ that we computed for $X$ in terms of small linear combinations of the integration periods $A_0$, $B_0$. In this particular example, the following relation holds (up to the working precision):
\[
\mat{\log A\\\log B\\\log D} = \mat{9&-6\\-6&3\\9&-3} \mat{\log A_0\\\log B_0}.
\]
 Since $A_0$ and $B_0$ are assumed to be algebraically independent, \eqref{eq:kadziela-relation} yields a homogeneous system of eight equations in eight variables (the entries of $Y$ and $Z$). It turns out that this system has a two-dimensional space of solutions, which allows us to extract the sought (non-zero) matrices $Y$ and $Z$. For example, one can take the Kadziela matrices
\[
Y = \mtx 1{4}{0}{-1}\quad Z=\mtx{-15}{30}{6}{-9}.
\]
A posteriori we check that the multiplicative relation $V^Y = {}^Z W$ holds as well, thus giving  numerical evidence that indeed the lattice $\Lambda_f'$ is isogenous to the lattice of $A_f$.

\section{Equations of genus two curves}\label{section 5}
In the previous section we have computed the period lattice $\Lambda_f'$ and, in order to compute the periods of $A_f$ and check Conjecture \ref{conj: main}, we have crucially used that in that case $A_f$ is the Jacobian of a genus two curve $X/F$. We remark that the equation of $X$ was known beforehand.

In this section, we describe a method to recover a model for the curve $X$ directly from the lattice $\Lambda_f'$. Since $\Lambda_f'$ is computed using only information of the modular form $f$, this can be seen as the higher dimensional analog of the method to compute the equation of the elliptic curve $A_f$ when $f$ has rational Hecke eigenvalues that was introduced in \cite{GMS}.

The idea of the method is as follows. First of all we compute the matrix $W=\smtx{A_0}{B_0}{C_0}{D_0}$ whose columns are the basis of $\Lambda_f'$ by means of the integration pairing. Suppose that $A_f$ is isogenous to the Jacobian of a genus two curve $X/F$, and let $V=\smtx{A}{B}{C}{D}$ be the period matrix of $X$. Granting Conjecture \ref{conj: main}, these two matrices will be related by
\begin{align*}
  V ^Y = {}^Z W,
\end{align*}
for some matrices $Y,Z\in \M_2(\Z)$. Assuming for now that we can find $Y$ and $Z$ (we will say later on how to do it), we can solve for $V$, and therefore obtain the periods $A$, $B$, $D$ of the curve $X$. From this, one readily recovers the periods $q_1$, $q_2$, $q_3$ using the relations \eqref{eq: qs in terms of ABC}:
\[
q_1 = BD, \quad q_2 = AB,\quad q_3 = B^{-1}.
\]
Taking square roots one recovers the half periods $p_i=q_i^{-1/2}$, which can be plugged in the power series for the Theta series~\eqref{eq:xsandps} and ultimately give the Weierstrass points of $X$. The resulting approximations to the Weierstrass points, although algebraic, are not defined over $F$ in general and we cannot hope to recognize them algebraically from these $p$-adic approximations. Instead, what we can to is to compute a $p$-adic approximation of the (absolute) invariants of the curve $X$, which are defined as
\begin{align*}
  i_1 = \frac{I_2^5}{I_{10}},\ \ i_2 = \frac{I_2^3I_4}{I_{10}}, \ \ i_3 = \frac{I_2^2I_6}{I_{10}}.
\end{align*}
Here, $I_2,I_4,I_6,I_{10}$ are the Igusa--Clebsch invariants, defined as certain symmetric polynomials in the Weierstrass points of $X$. Since $X$ is defined over $F$, then $i_1,i_2,i_3$ do belong to $F$.

Now, from the approximations to the Weierstrass points we can compute approximations to the invariants, say $\tilde i_1,\tilde i_2,\tilde i_3$. If we have enough precision we will be able to recognize them as elements of $F$. This gives us the method for finding the matrices $Y$ and $Z$: we run over pairs of $2\times 2$ integer matrices and, for each trial, we compute the resulting approximation to the invariants $\tilde i_1,\tilde i_2,\tilde i_3$; if we are not able to identify them as elements of $F$, this likely means that $Y$ and $Z$ are wrong and we try with a different pair.

Trying over pairs of matrices $Y,Z$ a priori involves exploring an eight-dimensional lattice, but one can reduce the search by imposing that the resulting matrix $V$ is symmetrical. The finding of $Y$ and $Z$, which is the two-dimensional generalization of the problem of guessing the valuation of the Tate parameter of an elliptic curve from its $p$-adic $L$-invariant, is one of the places that makes the computation challenging.

In addition to this, this strategy soon reveals another problem: the absolute invariants have very large height compared to the coefficients of a minimal Weierstrass model. If for a given pair $Y,Z$ we are not able to recognize $\tilde i_1,\tilde i_2,\tilde i_3$ as elements in $F$, this could also be because the height of the invariants $i_1,i_2,i_3$ is too large to be recognized with our working precision, and it would prevent the method to work unless one was able to compute to extremely high precision.

Instead, there is an improvement to the method that makes the computation feasible in some cases: since we are aiming at curves with a specific conductor (obtained from the level of the modular form from which we started), we have a certain control on what the discriminant of the curve should be. Unlike the case of elliptic curves, the support of the discriminant of the genus two curve may be larger than that of its conductor, and hence it is in principle possible that there are more primes in the discriminant than those appearing in our level. We expect this not to happen except for small primes, which we can include in our search. Suppose that we guess the discriminant $I_{10}$ of the curve. Then from the approximations $(\tilde i_1,\tilde i_2,\tilde i_3)$ to the invariants we can extract approximations to $I_2=(i_1I_{10})^{1/5}$, $I_4 = i_2 I_{10}I_2^{-3}$ and $I_6 = i_3 I_{10} I_2^{-2}$, which are defined over $F$ (because $I_{10}$ belongs to $F$) and have lower height than $I_{10}$ itself.

As a proof of concept, we illustrate the method with an example for which we had the target curve beforehand. Consider the number field $F=\Q(\alpha)$, with $\alpha$ a root of the polynomial $x^3-x^2+1$, which has signature $(1,1)$. We consider the quaternion algebra with relations $i^2=9\alpha^2-3\alpha-11$, $j^2=-2\alpha^2$, which has discriminant $\fd=(8\alpha^2-10\alpha-1)$ of norm $821$. Let $\fp=(-2\alpha^2+\alpha)$ be the unique prime of norm $7$ in $F$. In this case we find a $2$-dimensional component of the cohomology and homology, on which $T_\fl$ (with $\fl = (\alpha^2+\alpha-2)$, a prime of norm $11$) acts with characteristic polynomial
$x^2-2x-19$. In fact, with respect to the chosen pseudo-orthogonal bases $\{\varphi_1,\varphi_2\}$, $\{\theta_1,\theta_2\}$, the operator $T_\fl$ acts on the homology via the matrix
\[
\mat{T_\fl\theta_1\\T_\fl\theta_2}= \mtx{-1}{-4}{-4}{3}^t \mat{\theta_1\\\theta_2}.
\]
This corresponds then to a newform $f$ such that $K_f\simeq \Q(\sqrt{5})$, and we aim to compute the invariants of a curve $X/F$ such that $\Jac(X)=A_f$.

The first step is to compute the periods of the lattice $\Lambda_f'$ by means of the integration pairing, as in the previous section. They are:
\begin{align*}
  A_0 &= 7^{-4}\cdot 27132321333884163473566078077966608077268973477\pmod{7^{52}}\\
  B_0 &= 397745278075295216478310410412961033205591801491513\pmod{7^{60}}.
\end{align*}

Guessing the Kadziela matrices
\[
Y = \mtx{-1}{-1}{-1}{0} \quad Z=\mtx{1}{1}{1}{0},
\]
which can be done by looping over matrices with small entries, we can compute a new set of periods
\begin{align*}
A &= 7^{-1} \cdot 180373636240760651045145390062543188665673147874 + O(7^{55})\\
B &= 101858856942719452845868815022429183828273612324 + O(7^{56})\\
D &= 7^{-1} \cdot 80209973804903028832117210143467211207304220322 + O(7^{55})
\end{align*}

This yields $q_1$, $q_2$, $q_3$:
\begin{align*}
q_1 &=7^{-1} \cdot 180373636240760651045145390062543188665673147874 + O(7^{55})\\
q_2 &=7^{-1} \cdot 146582506580515644910043665072399073999059180487 + O(7^{55})\\
q_3 &=2524063863085285102995202849415046621669591961 + O(7^{56}).
\end{align*}
From this we compute the half periods taking square roots and the Weierstrass points using formula \eqref{eq:xsandps}. With the Weierstrass points, we can compute approximations to the invariants
\begin{align*}
i_1 = I_2^5 I_{10}^{-1} &= 7^{-2} \cdot 383000380988298534086703050832398358583029537 + O(7^{51})\\
i_2 = I_2^3 I_4 I_{10}^{-1} &= 7^{-2} \cdot 216286438165031483296107998530348655636952080 + O(7^{51})\\
i_3 = I_2^2 I_6 I_{10}^{-1} &=7^{-2} \cdot 17712448343391292208503851621997332642044090 + O(7^{50}).
\end{align*}
The discriminant of the sought hyperelliptic curve should have support $\{2, \fp, \fd\}$. In this case, the fundamental unit of $F$ is $\alpha$, so we have tried discriminants of the form 
\begin{align*}
  I_{10}=\alpha^a2^b(-2\alpha^2+\alpha)^2 (8\alpha^2-10\alpha-1)^2.
\end{align*}
That is, for different pairs of $a$ and $b$ we have computed $I_2$, $I_4$, $I_6$, and we have tried to identify them as elements of $F$. This has worked for $a=-12$ and $b=12$, so that our guess of the discriminant is
\[
I_{10} = \alpha^{-12} 2^{12} (-2\alpha^2+\alpha)^2 (8\alpha^2 -10\alpha -1)^2
\]
With this $I_{10}$, we have recognized $I_2$, $I_4$, and $I_6$ to be the following elements in $F$:
\begin{align*}
I_2 &=576 \alpha^{2} - 712 \alpha + 840,\\
I_4 &=7396 \alpha^{2} - 11208 \alpha + 9636,\\
I_6 &=2882256 \alpha^{2} - 4646648 \alpha + 3543824.
\end{align*}
It is worth remarking how the quantities $I_2$, $I_4$, $I_6$ were actually found. If one has access to arbitrarily high precision, one can simply use the \texttt{algdep} commands that exist in Sage or Pari, which return a polynomial approximately satisfied by the input. However, since we are expecting to find elements in $F$ we may be more successful finding linear dependency relations among the input and a power basis of $F$. In this example, while $50$ $p$-adic digits suffice to recover the Igusa invariants, the \texttt{algdep} command would require instead about $90$ $p$-adic digits, which would make the computation nearly unfeasible with the computational resources  available to us.

Under Conjecture \ref{conj: main}, the invariants $(I_2,I_4,I_6,I_{10})$ should correspond to the invariants of a genus two curve $X/F$ such that $\Jac(X)=A_f$. At this point, and one can run Mestre's algorithm, which is implemented in \texttt{Magma} to find the equation of a curve $X'/F$ with these invariants. The curve $X'$ will be then a twist of $X$, and by looking at its conductor one can untwist it to recover $X$. This is the strategy used, in a similar situation, in \cite[\S 4.1.1]{dembele-kumar}. However, in this case the model that Mestre's algorithm outputs has very large height (as is usually the case), and since $F$ is not totally real we do not currently dispose of algorithms to reduce it to a more manageable size.

For this example, we provide an independent check of the fact that the above invariants seem to correspond to the invariants of a curve whose Jacobian is $A_f$. Consider the curve given by:
{\small
\[
y^2 + (x^3 + (-\alpha^2 - 1)x^2 - \alpha^2x + 1)y =(-\alpha ^2 + 1)x^4 - 2\alpha ^2x^3 + (-\alpha ^2 - 3\alpha  - 1)x^2 + (-3\alpha  - 2)x - \alpha  - 1.
\]
}
As in the previous section, we have obtained it by specialization of the parameters in the Brumer family, so the endomorphism algebra of its Jacobian contains $K_f=\Q(\sqrt{5})$. Moreover, it has conductor $\fp^2\fd^2$, and we have checked that the number of points modulo $\fl$ that lie on the curve agrees with the expected value according to the eigenvalue of $f$. It seems to correspond, then, to the abelian surface $A_f$, and one can check that its Igusa--Clebsch invariants coincide with the ones found above.  This gives evidence that  method presented in this section can be used to compute the Igusa invariants of a genus two curve whose Jacobian is $A_f$ in terms of the integration pairing $\langle\cdot  ,\cdot \rangle_f$.

\section{$p$-adic $L$-invariants}\label{section 6}
The Hecke-equivariant pairing $\langle \cdot , \cdot \rangle_f$ of Equation~\eqref{eq: pairing 2}, together with Conjecture \ref{conj: main}, allows for the definition of a $p$-adic $L$-invariant of $f$. In the case of dimension $2$, it can be computed explicitly by means of the methods of Section \ref{section 4}. The fact that it coincides when $F=\Q$ with the $p$-adic $L$-invariant introduced in \cite{MTT} allows for an alternative method of computation as well.

We define the $p$-adic $L$-invariant only for abelian surfaces, the higher dimensional case is analogous. Let $\mathbb{T}_f\subset \End(H^{1}(\Gamma_0(\fp\fm),\Q)^f)$ be the set of endomorphisms generated by the Hecke operators acting on $H^{1}(\Gamma_0(\fp\fm),\Q)^f$. We have that $\mathbb{T}_f\otimes \Q$ is a number field isomorphic to $K_f$, which also acts on $H_{1}(\Gamma_0(\fp\fm),\Q)^f$ in a way compatible with $\langle \cdot , \cdot \rangle_f$. Moreover, $H_{1}(\Gamma_0(\fp\fm),\Q)^f$ and $H^{1}(\Gamma_0(\fp\fm),\Q)^f$ are $K_f$-modules of rank $1$. 

Now consider the maps $\alpha,\beta \colon H_{1}(\Gamma_0(\fp\fm),\Q)^f\ra \Hom (H^1(\Gamma_0(\fp\fm),\Q)^f,F_\fp)$ given by
\begin{align*}
 \alpha(\theta)(\varphi) = \ord_\fp(\langle \theta,\varphi\rangle_f), \ \  \beta(\theta)(\varphi) = \log_\fp(\langle \theta,\varphi\rangle_f),
\end{align*}
where $\ord_\fp$ denotes the order at $\fp$ and $\log_\fp$ a $\fp$-adic logarithm (say, the one that takes a generator of $\fp$ to $0$). Under Conjecture \ref{conj: main} the map $\ord_\fp \langle \cdot , \cdot \rangle_f$ is non-degenerate, making $\alpha$ an isomorphism. The $p$-adic $L$-invariant is defined as the unique element $\mathcal{L}_\fp(f)\in \mathbb{T}_f\otimes F_\fp$ such that $\beta = \mathcal{L}_\fp(f) \alpha$. 

\subsection{Examples over $\Q$} Teitelbaum introduced in \cite{Tei} a method to compute $p$-adic $L$-invariants of Jacobians of genus two modular curves. In this section we compute the $p$-adic $L$-invariants attached to $2$-dimensional simple factors of Jacobians of modular curves of genus larger than $2$, for which in principle one cannot use the methods of~\cite{Tei}.

As a first example, consider the space of modular forms of level $\Gamma_0(165)$ (note that $165= 3\cdot 5\cdot 11$). We work with the indefinite quaternion algebra $B$ of discriminant $15$ and with $p=11$. We find a two-dimensional factor of the cohomology $H^1(\Gamma_0^B(11),\Z)$, on which the Hecke operator $T_2$ acts with characteristic polynomial $x^2+2x-1$. This corresponds to a modular form $f$ whose Hecke eigenvalues generate the field $\Q(\sqrt{2})$. This defines an isomorphism $\mathbb{T}_f\otimes_{\Z}\Q_{p} \cong \Q_{p^2}=\Q_{p}(T_2)$, and we find that $\mathcal{L}_{p}(f)$ is
\[
11 \cdot 2434708053353386815382354389779 + 11^2 \cdot 1134757179957513984261268713424\cdot T_2 + O(11^{31}).
\]

As a second example, consider this time $\Gamma_0(357)$ (note that $357=3\cdot 7\cdot 17$). We work with the quaternion algebra of discriminant $3\cdot 17$, and with $p=7$. In this case, the Hecke algebra is generated by $T_5$ which has characteristic polynomial $x^2+2x-1$ (the fact that it is the same as the previous example is a mere coincidence), giving an isomorphism $\mathbb{T}_f\otimes_{\Z} \Q_p\cong\Q_{p^2}=\Q_p(T_5)$, and in this case we find that $\mathcal{L}_p(f)$ is
\[
7 \cdot 15066781074161344457224002 + 7^2 \cdot 1814973922464853030271319\cdot T_5 + O(7^{31}).
\]

\subsection{Examples over cubic number fields}
We have also computed $p$-adic $L$-invariants of some modular forms over number fields $\Q(\alpha)$. Here is the summarized data for the modular forms of the examples in the previous sections.

The example of \S\ref{subsection: a numerical verification}, with minimal polynomial for $\alpha$ being: $ x^3 - x^2 + 3x - 2$:
\begin{itemize}
\item Level $\fp\cdot \fd$:  $(\alpha^2 + 1)_5 \cdot (-2\alpha^2 + 4\alpha - 7)_{173}$.
\item Characteristic polynomial of $T$: $x^{2} + x - 1$.
\item $\mathcal{L}_5(f) = 7483779755785384529304478059 + 1668041363337346469653221493\cdot T + O(5^{40})$.
\end{itemize}

The example of \S\ref{section 5}, with minimal polynomial for $\alpha$ being $x^3 - x^2 + 1$:
\begin{itemize}
\item Level $\fp\cdot\fd$:  $(-2\alpha^2 + x)_7\cdot(8\alpha^2 - 10\alpha - 1)_{821}$.
\item Characteristic polynomial of $T$: $x^{2} - 2x - 19$.
\item $\mathcal{L}_7(f) =  7 \cdot 20049683766104040108804775 + 7 \cdot 10498143651203088689572467 \cdot T+ O(7^{31})$.
\end{itemize}

\bibliographystyle{amsalpha}
\bibliography{refs}

\end{document}